\definecolor{sienne}{RGB}{136, 45, 23}
\newtheorem{theorem}{Theorem}[section]
\newtheorem{prop}[theorem]{Proposition}
\newtheorem{corollary}[theorem]{Corollary}
\newtheorem{lemma}[theorem]{Lemma}
\theoremstyle{definition}
\newtheorem{definition}[theorem]{Definition}
\theoremstyle{remark}
\newtheorem{remark}[theorem]{Remark}
\newcommand{\puncfootnote}[1]{\kern-0.2em\footnote{#1}}
\DeclareMathOperator{\distance}{distance}
\DeclareMathOperator{\supp}{supp}
\DeclareMathOperator{\ran}{Range}
\DeclareMathOperator{\abscissa}{abscissa}
\numberwithin{equation}{section}
\newcommand{\diff}[1][-3]{\ensuremath{\mathop{}\mkern#1mu\mathrm{d}}}
\def\eu{\ensuremath{\mathrm{e}}}
\def\iu{\ensuremath{\mathrm{i}}}
\newcommand{\set}{\mathbb}
\newcommand*{\noic}{\sb{}\kern-\scriptspace }
\newcommand{\no}{n\rule[0.4ex]{0.4em}{0.7pt}\kern-1.08ex\textsuperscript{o}}
\begin{document}
\title{Control of the Grushin equation: non-rectangular control region and minimal time}\thanks{
The first author was partially supported by the Project ``Analysis and simulation of optimal shapes - application to lifesciences'' of the Paris City Hall.
}\thanks{The second author was partially supported by the ERC advanced grant 
SCAPDE, seventh framework program, agreement \no~320845.}
\author{Michel Duprez}\address{
Sorbonne Universit\'es, UPMC Univ Paris 06, CNRS UMR 7598, Laboratoire Jacques-Louis Lions, F-75005, Paris,
France. e-mail: \texttt{mduprez@math.cnrs.fr}}
\author{Armand Koenig}\address{Université Côte d’Azur, CNRS, LJAD, France. e-mail: \texttt{armand.koenig@unice.fr}}
\date{\today}
\begin{abstract} This paper is devoted to the study of the internal null-controllability of the Grushin equation. 
We determine the minimal time of controllability for a large class of non-rectangular control region.
We establish the positive result thanks to the fictitious control method and the negative one by interpreting the associated observability inequality as an $L^2$ estimate on complex polynomials. \end{abstract}
%
%
\subjclass[2010]{93B05, 93C20, 35K65}
\keywords{controllability, minimal time, degenerated parabolic equations}
\maketitle
\section{Setting}

Let $\Omega := (-1,1)\times \set (0,\pi)$, $\omega$ be an open subset of $\Omega$ and $T>0$.
We denote by $\partial\Omega$ the boundary of $\Omega$.
In this work, we consider the Grushin equation:
\begin{equation}\label{eq:control_problem_eng}
\left\{
\begin{array}{ll}
(\partial_t - \partial_x^2 - x^2\partial_y^2)f(t,x,y)  = \mathbf 1_\omega u(t,x,y) &  t\in [0,T], (x,y)\in \Omega,\\
f(t,x,y) = 0 &  t \in [0,T], (x,y)\in \partial\Omega,\\
f(0,x,y) = f_0(x,y)& (x,y)\in \Omega,
\end{array}
\right.
\end{equation}
where $f_0\in L^2(\Omega)$ is the initial data and $u\in L^2([0,T]\times\omega)$ the control.

Define the inner product 
\[(f,g):=\int_{\Omega}(\partial_xf\partial_xg+x^2\partial_yf\partial_yg)\diff x\diff y,\]
for all $f,g\in  C_0^{\infty}(\Omega)$,
and set
$V(\Omega):=\overline{ C_0^{\infty}(\Omega)}^{|\cdot|_{V(\Omega)}}$, with $|\cdot|_{V(\Omega)}:=(\cdot,\cdot)^{1/2}.$

For any initial data $f_0\in L^2(\Omega)$ and any control $u\in L^2([0,T]\times\Omega)$,
it is well known~\cite[Section 2]{beauchard_2014} that the Grushin equation~\eqref{eq:control_problem_eng}
admits a unique weak solution 
\begin{equation}\label{prop:well}
f\in  C([0,T];L^2(\Omega))\cap L^2(0,T;V(\Omega)).
\end{equation}

The Grushin equation~\eqref{eq:control_problem_eng} is said to be \emph{null-controllable} in time $T>0$ 
if for each initial data $f_0$ in $L^2(\Omega)$, there exists a control $u$ in $L^2((0,T)\times \Omega)$ such that the solution $f$ to System~\eqref{eq:control_problem_eng}
satisfies $f(T,x,y) = 0$ for all $(x,y)$ in $\Omega$.

\section{Bibliographical comments}\label{sec:bib com}

The null-controllability of the heat equation is well known in dimension one since 1971~\cite{fattorini_1971} and in higher dimension since 1995~\cite{lebeau_1995, fursikov_1996}: the heat equation on a $C^2$ bounded
domain is null-controllable in any non-empty open subset and in arbitrarily small time. But for degenerate parabolic equations, i.e. equations where the
laplacian is replaced by an elliptic operator that is not uniformly elliptic, we only have results for some particular equations. For instance, control properties of one and two dimensional parabolic equations where the degeneracy
is at the boundary are now understood~\cite{cannarsa_2016}. Other examples of control properties of degenerate parabolic equations that have been
investigated include some Kolmogorov-type equation~\cite{beauchard_2015a}, quadratic differential equations~\cite{beauchard_2016} and the heat equation on
the Heisenberg group~\cite{beauchard_2017} (see also references therein).

About the Grushin equation, Beauchard, Cannarsa and Guglielmi~\cite{beauchard_2014} proved in 2014 that if $\omega$ is a vertical strip that does not touch the degeneracy line $\{x=0\}$, there exists $T^\ast>0$ such that the Grushin equation is not null-controllable if
$T<T^\ast$ and that it is null-controllable if $T>T^\ast$; moreover, they proved that if $a$ is the distance between the control domain $\omega$ and the degeneracy line $\{x=0\}$, then $a^2\!/2 \leq T^\ast$. Then, Beauchard, Miller
and Morancey~\cite{beauchard_2015} proved that if $\omega$ is a vertical strip that touches the degeneracy line, null-controllability holds in arbitrarily small time, and that
if $\omega$ is two symmetric vertical strips, the minimal time is exactly
$a^2\!/2$, where $a$ is the distance between $\omega$ and $\{x=0\}$. The
minimal time if we control from the left (or right) part of the boundary was
computed by Beauchard, Dardé and Ervedoza~\cite{beauchard_2018}, and our
positive result is based on that. On the other hand, the second author proved
that if $\omega$ does not intersect a horizontal strip, then the Grushin
equation is never null-controllable~\cite{koenig_2017}, and our negative result
is proved with the methods of this reference.

So, the Grushin equation needs a minimal time for the null-controllability to
hold, as do the Kolmogorov equation and the heat equation on the Heisenberg group (see above references); a feature more expected for hyperbolic equations
than parabolic ones. Note, however, that degenerate parabolic equations are not the only parabolic equations that exhibit a minimal time of null-controllability. In dimension one, Dolecki has proved there exists a
minimal time for the punctual controllability of the heat equation to hold~\cite{dolecki73}, and parabolic systems may also present a minimal time of
null-controllability  (see e.g. \cite{ammar2014,ammar2016,D2017,khodja2017quantitative}).

Another problem we wight look at is the \emph{approximate} null-controllability. Approximate null-controllability of course holds when \emph{exact} null-controllability hold. Actually, approximate null-controllability always holds for the Grushin equation~\cite[Proposition 3]{beauchard_2014} (see also references therein) and Morancey proved approximate null-controllability also holds if we add some potential that is singular at $x=0$ \cite{morancey_sing}.

\section{Main results}

Our first result is about the null-controllability in large time if the control domain contains an $\varepsilon$-neighborhood of a path that goes from the bottom boundary to the top boundary:
\begin{theorem}[Positive result]\label{th:main_postiive}
Assume that there exists $\varepsilon>0$ and\footnote{We denote $\overline{\Omega}$ the closure of $\Omega$.} $\gamma\in C^{0}([0,1],\overline{\Omega})$ 
with $\gamma(0)\in (-1,1)\times\{0\}$ and $\gamma(1)\in (-1,1)\times\{\pi\}$ such that
\begin{equation}\label{omega_0}
\omega_0 \coloneqq \{z\in\Omega, \distance(z,\ran(\gamma))<\varepsilon\}\subset\omega,
\end{equation}
(see Fig.~\ref{fig:omega_pos_th}). Let 
\begin{equation}\label{def a}
a\coloneqq\max_{s\in[0,1]}(|\abscissa(\gamma(s))|).
\end{equation}
Then the Grushin equation~\eqref{eq:control_problem_eng} is null-controllable on $\omega$ in any time $T>a^2\!/2$.
\end{theorem}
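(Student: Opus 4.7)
The plan is to apply the fictitious control method: view the curve $\gamma$ as a virtual internal boundary, apply the sharp boundary null-controllability result of Beauchard, Dardé, and Ervedoza~\cite{beauchard_2018} on each of the two subdomains cut out by $\gamma$, and finally recast the resulting boundary controls as a genuine interior control supported in $\omega$. Since the condition $T > a^2\!/2$ is open, by a small perturbation of $\gamma$ inside its $\varepsilon$-tube (which does not affect $a$ in any essential way) I may assume that $\gamma$ is a smooth simple arc separating $\overline\Omega$ into two closed connected regions $\overline{\Omega_L}$ and $\overline{\Omega_R}$, with a thinner tube around $\gamma$ still contained in $\omega$.

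On each subdomain I consider the Grushin equation with homogeneous Dirichlet conditions on the portion of $\partial\Omega$ that is inherited and a free Dirichlet datum $g$ on the common interface $\gamma$. After straightening $\gamma$ into a vertical segment by a diffeomorphism (which preserves the $x^2 \partial_y^2$ degeneration up to lower-order terms), this is precisely the setup of~\cite{beauchard_2018}: boundary null-controllability from a vertical edge situated at distance at most $a$ from the degeneracy line $\{x = 0\}$. The reference then yields, for any $T > a^2\!/2$, boundary data $g_L$ and $g_R$ driving each subsystem to zero at time $T$, with corresponding solutions $f_L$ and $f_R$.

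To produce a genuine interior control, I fix a smooth cutoff $\chi$ equal to $1$ on a thin neighborhood of $\gamma$ and compactly supported in $\omega$, and I define $\tilde f$ by gluing $f_L \mathbf 1_{\Omega_L} + f_R \mathbf 1_{\Omega_R}$ outside $\supp \chi$ with a smooth interpolation of the two traces inside $\supp \chi$. The function $\tilde f$ is a controlled trajectory on $\Omega$ satisfying $\tilde f(T,\cdot) = 0$, and the source $u \coloneqq (\partial_t - \partial_x^2 - x^2 \partial_y^2)\tilde f$ is supported in $\supp \nabla \chi \subset \omega$ and is therefore an admissible internal control.

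The main obstacle is precisely this synthesis step: the boundary data $g_L, g_R$ produced by~\cite{beauchard_2018} have only limited trace regularity, whereas the commutator $u$ must belong to $L^2((0,T) \times \omega)$ rather than to a negative Sobolev space. I would handle this by first driving the initial datum $f_0$ to a smooth intermediate state in an arbitrarily short extra time $\tau$, exploiting the parabolic smoothing of the Grushin equation, and then applying the boundary construction on the remaining time $T - \tau > a^2\!/2$; the openness of the minimal-time condition makes the loss $\tau$ harmless. A secondary subtlety is ensuring that the straightening diffeomorphism does not degrade the sharp minimal time, which can be addressed either by a perturbative argument inside~\cite{beauchard_2018} or by approximating $\gamma$ by a concatenation of vertical segments to which that reference applies literally.
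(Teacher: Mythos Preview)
Your approach differs substantially from the paper's and has a genuine gap at the straightening step. The result of~\cite{beauchard_2018} concerns the Grushin operator $\partial_t-\partial_x^2-x^2\partial_y^2$ on a \emph{rectangular} domain with boundary control on a vertical side $\{x=-L\}$; the minimal time $L^2\!/2$ is dictated precisely by the coefficient $x^2$ and by the distance of that vertical side to the degeneracy line $\{x=0\}$. A diffeomorphism sending an arbitrary curve $\gamma$ to a vertical segment does not preserve this structure ``up to lower-order terms'': if it fixes the $x$-coordinate it introduces variable coefficients and cross terms that take you outside the scope of~\cite{beauchard_2018}, and if it moves the $x$-coordinate it displaces the degeneracy line and alters the minimal time in an uncontrolled way. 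Your proposed fixes are not routine either: a perturbative argument inside~\cite{beauchard_2018} would require redoing that paper for a genuinely different operator, and approximating $\gamma$ by piecewise vertical segments produces non-rectangular subdomains $\Omega_L,\Omega_R$ to which~\cite{beauchard_2018} again does not apply.

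The paper sidesteps all of this by \emph{not} cutting $\Omega$ along $\gamma$. Instead it solves two internal control problems on the \emph{whole} of $\Omega$, with controls supported in the vertical strips $\omega_{\text{left}}=(-1,-a)\times(0,\pi)$ and $\omega_{\text{right}}=(a,1)\times(0,\pi)$; internal null-controllability from such strips in time $T>a^2\!/2$ follows from~\cite{beauchard_2018} by a standard boundary-to-interior argument (Theorem~\ref{th:equiv}). The curve $\gamma$ enters only topologically, through a cutoff $\theta\in C^\infty(\overline\Omega)$ with $\theta=0$ on $\omega_{\text{left}}\setminus\omega_0$, $\theta=1$ on $\omega_{\text{right}}\setminus\omega_0$, and $\supp(\nabla\theta)\subset\omega_0$ (Lemma~\ref{th:cutoff}). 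The combination $f=\theta f_{\text{left}}+(1-\theta)f_{\text{right}}$ is then a controlled trajectory whose source is supported in $\omega_0\subset\omega$ and lies in $L^2$ simply because $f_{\text{left}},f_{\text{right}}\in L^2(0,T;V(\Omega))$. No diffeomorphism, no curved-boundary control problem, and no extra smoothing step are needed.
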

We prove this Theorem in Section~\ref{sec:pos}.

\begin{figure}
\centering
\begin{tikzpicture}[scale=2.5]
\draw[fill=green!50] (-1.1,0) .. controls (-1.7767,0.3453) and (-1.5545,0.7659) .. (-1.3545,0.7659) .. controls (-1.1545,0.7659) and (-0.7992,0.3044) .. (0.0481,0.3126) .. controls (0.3371,0.3435) and (0.6,0.6) .. (0.8,1) .. controls (0.8,1) and (1,1) .. (1,1) .. controls (0.8,0.6) and (0.4668,0.2183) .. (0.0444,0.192) .. controls (-0.6881,0.1878) and (-0.9,0.4) .. (-1.1451,0.5424) .. controls (-1.3885,0.7097) and (-1.4759,0.6101) .. (-1.3977,0.4409) .. controls (-1.2759,0.2101) and (-1.1465,0.1925) .. (-0.9,0) .. controls (-0.9,0) and (-1.1,0) .. (-1.1,0);
\draw[dashed] (-1.488,0.55) -- (-1.488,-0) node[below]{$-a$};
  \draw (-2,0) rectangle (2,1);
 \draw[->] (-2.2,0) -- (2.2,0) node[above]{$x$};
 \draw[->] (0,-0.2) -- (0,1.2) node[right]{$y$};
\draw[->] (1.2402,0.2244)node[right]{$\omega_0$} -- (0.5407,0.4428);
\draw[blue] (-1,0) .. controls (-1.4332,0.2846) and (-1.4225,0.299) .. (-1.4743,0.4381) .. controls (-1.5265,0.6283) and (-1.4179,0.7119) .. (-1.337,0.6876) .. controls (-1.2643,0.6719) and (-1.1059,0.5927) .. (-0.817,0.4359) .. controls (-0.5284,0.2964) and (-0.4264,0.2928) .. (-0.1871,0.2626) .. controls (0.1021,0.2333) and (0.2273,0.2458) .. (0.4758,0.4386) .. controls (0.6298,0.5679) and (0.7454,0.7164) .. (0.9,1);
\draw[blue,->] (-0.5569,0.5726)node[above]{$\gamma$} -- (-0.7525,0.4054);
\end{tikzpicture}
\caption{In green, an example of function $\gamma$ and a domain $\omega_0$ for Theorem~\ref{th:main_postiive}.
If $\omega$ contains such domain $\omega_0$, 
then the Grushin equation~\eqref{eq:control_problem_eng} is null-controllable in time $T>a^2\!/2$.}
\label{fig:omega_pos_th}
\end{figure}
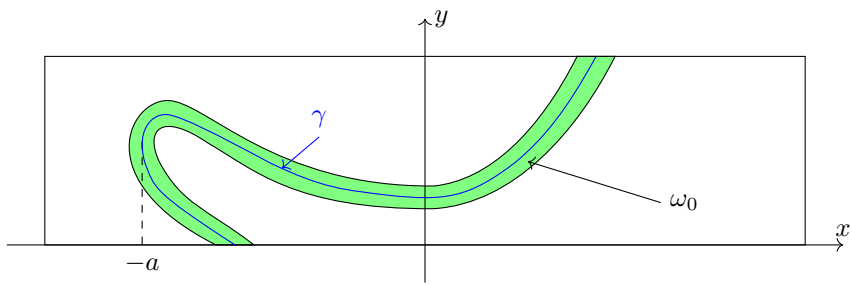

\begin{remark}\label{rk:generalized_grushin}
\begin{enumerate}
\item Theorem \ref{th:main_postiive} can be adapted for $\Omega:=(L_-,L_+)\times (0,\pi)$ and 
 the following generalized version of the Grushin equation:
\begin{equation}\label{eq:control_problem_eng q(x)}
\left\{
\begin{array}{ll}
(\partial_t - \partial_x^2 - q(x)^2\partial_y^2)f(t,x,y)  = \mathbf 1_\omega u(t,x,y) &  t\in [0,T], (x,y)\in \Omega,\\
f(t,x,y) = 0 &  t \in [0,T], (x,y)\in \partial\Omega,\\
f(0,x,y) = f_0(x,y)& (x,y)\in \Omega,
\end{array}
\right.
\end{equation}
where $q$ satisfies the following conditions:
\begin{equation*}
q(0)=0, ~~q\in C^3([L_-,L_+]),~~\inf_{(L_-,L_+)}\{\partial_xq\}>0.
\end{equation*}
Then System \eqref{eq:control_problem_eng q(x)}
is null-controllable in any time $T>T^*=q(0)^{-1}\int_0^{a}q(s)\diff s$.
Indeed, the proof of Theorem \ref{th:main_postiive} is based on the result of \cite{beauchard_2018}  which is given in the setting of the equation \eqref{eq:control_problem_eng q(x)}.
\item Since the Grushin equation~\eqref{eq:control_problem_eng} is not null-controllable when $\omega$ is the complement of a horizontal
strip~\cite{koenig_2017}, Assumption~\eqref{omega_0} is quasi optimal.
\end{enumerate}
\end{remark}

We now state our second main result:
\begin{theorem}[Negative result]\label{th:main_negative}
If for some $y_0\in(0,\pi)$ and $a>0$, the horizontal open segment $\{(x,y_0), |x|<a\}$ is disjoint from $\overline{\omega}$ (see Fig.~\ref{fig:omega_neg_th}), 
then the Grushin equation~\eqref{eq:control_problem_eng} is not null-controllable in time $T<a^2\!/2$.
\end{theorem}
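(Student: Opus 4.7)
The strategy is to disprove the observability inequality dual to null-controllability. By the Hilbert Uniqueness Method, null-controllability in time $T$ is equivalent to the existence of a constant $C>0$ such that
\[
\|g(T,\cdot,\cdot)\|_{L^2(\Omega)}^2 \leq C\int_0^T\!\int_\omega |g(t,x,y)|^2\diff x\diff y\diff t
\]
for every solution $g$ of the (self-adjoint) homogeneous Grushin equation with Dirichlet boundary data; it thus suffices to construct a sequence of solutions $(g_N)$ for which this ratio tends to $+\infty$.

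Separating variables in $y$ reduces the equation to the family of one-dimensional problems $\partial_t g_n - \partial_x^2 g_n + n^2 x^2 g_n = 0$ on $(-1,1)$. The semiclassical analysis of the Dirichlet operator $A_n := -\partial_x^2 + n^2 x^2$ yields, for the ground state, the eigenvalue $\lambda_1^n = n + O(e^{-cn})$ and an eigenfunction $\phi_1^n$ that is well-approximated in the bulk by the scaled Gaussian $(n/\pi)^{1/4}e^{-nx^2/2}$, together with the pointwise bound $|\phi_1^n(x)|\lesssim n^{1/4}e^{-nx^2/2}$ on $[-1,1]$. This Gaussian localization at scale $1/\sqrt{n}$ is the mechanism that produces the minimal time $a^2/2$: for $|x|\geq a$ it gives a factor $e^{-na^2}$ that must be compared with the time-$T$ decay $e^{-2T\lambda_1^n}\approx e^{-2Tn}$.

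Pick $a_0\in(0,a)$. By the disjointness hypothesis and compactness of $[-a_0,a_0]\times\{y_0\}$, there exists $\delta>0$ such that $\omega$ is disjoint from the rectangle $R:=(-a_0,a_0)\times(y_0-\delta,y_0+\delta)$. I then test observability on
\[
g_N(t,x,y):=\sum_{n=N}^{2N}c_n\,e^{-t\lambda_1^n}\,\phi_1^n(x)\,\sin(ny),
\]
with coefficients $c_n$ chosen so that the time-$T$ profile is a wave packet concentrated near $(x,y)=(0,y_0)$---a natural choice being $c_n=\sin(ny_0)$ up to a smooth frequency cut-off. The numerator satisfies $\|g_N(T)\|^2\asymp\sum_n|c_n|^2 e^{-2T\lambda_1^n}$, of order $e^{-2TN}$ up to polynomial factors in $N$. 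On the part of $\omega$ with $|y-y_0|<\delta$ the constraint $|x|\geq a_0$ together with the Gaussian bound gives a contribution bounded by $\mathrm{poly}(N)\,e^{-Na_0^2}$, which is the good term.

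The main obstacle is the contribution from the part of $\omega$ with $|y-y_0|\geq\delta$, where $\omega$ may come arbitrarily close to $x=0$ and the pointwise Gaussian decay is absent; cancellation in the trigonometric sum is needed. Following the approach of~\cite{koenig_2017} announced in the abstract, I would substitute the Gaussian approximation into $g_N$ and observe that, setting $z=e^{iy}$ and $w=e^{-(t+x^2/2)}z$, the resulting expression is the imaginary part of a polynomial $P(w):=\sum_{n=N}^{2N}c_n(n/\pi)^{1/4}w^n$ of degree $\leq 2N$ evaluated at $w$ on the circle of radius $e^{-(t+x^2/2)}<1$. Then $\int_\omega|g_N|^2$ becomes an integral of $|\operatorname{Im} P|^2$ over arcs of concentric circles inside the open unit disk, and the desired upper bound reduces to a Bernstein/Remez-type $L^2$ inequality for polynomials of degree $\leq 2N$. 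The sharp loss constant of such an estimate turns out to be exponential of order $e^{Na_0^2}$, which, combined with the $e^{-2TN}$ decay of $\|g_N(T)\|^2$, produces an observability quotient bounded below by $e^{N(a_0^2-2T)}/\mathrm{poly}(N)$. This diverges as $N\to\infty$ when $T<a_0^2/2$, and letting $a_0\uparrow a$ gives non-controllability for every $T<a^2/2$. The most delicate step is to pin down the sharp exponent $a^2$ in the polynomial estimate, while simultaneously controlling the $O(e^{-cn})$ discrepancy between $\phi_1^n$ and its Gaussian model and the boundary layers at $x=\pm 1$.
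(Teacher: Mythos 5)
Your proposal tracks the paper for the first half of the argument---separating variables, using the ground state of $-\partial_x^2 + n^2x^2$, approximating $v_n$ by a Gaussian, and converting the observation integral via $w = e^{-t-x^2/2+iy}$ into an integral of a polynomial over arcs of concentric circles. But the second half of your argument, where you pick explicit wave-packet coefficients $c_n=\sin(ny_0)$ and try to show that $\int_\omega|g_N|^2$ is of order $e^{-Na_0^2}$, has a genuine gap, and it is not a technical one.

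Take $c_n = \sin(ny_0)$, fix $x$ near $0$ and $t$ near $0$, and look at the $y$-integral over $|y-y_0|\geq\delta$. After the sine-product identity, the sum $\sum_{n=N}^{2N}\sin(ny_0)\sin(ny)$ is a difference of two Dirichlet kernels, which is $O(1/\delta)$ there, not exponentially small; even with a smooth (indeed Gevrey) cut-off in $n$, the tail of the wave packet in $y$ decays only subexponentially in $N$, never like $e^{-cN}$. Since the $x$-profile carries no Gaussian suppression near $x=0$, the contribution from $\omega\cap\{|y-y_0|\geq\delta\}$ is polynomial in $N$. Hence $\int_\omega|g_N|^2$ is polynomially large, the observability quotient for this family goes to $0$, not to $\infty$, and the wave packet is not a counterexample. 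A Bernstein/Remez-type inequality does not rescue this: such inequalities bound a polynomial on a large set by its behaviour on a smaller one and thus rule out too much cancellation, which is the opposite of what you need here. They would be relevant to a \emph{positive} observability statement, not to disproving one.

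The paper avoids this trap by never committing to an explicit family of coefficients. It shows that the observability inequality \emph{implies} an $L^2$-on-disk-versus-$L^\infty$-on-region estimate for polynomials,
\[
\lvert p\rvert_{L^2(D(0,e^{-T}))} \le C\,\lvert p\rvert_{L^\infty(U)},
\]
where $U$ is an annular region with a sector removed around $\arg z = y_0$ down to radius $\approx e^{-a'^2/2}$. When $T<a'^2/2$, the disk $D(0,e^{-T})$ protrudes outside $U$, and Runge's theorem produces polynomials $p_k\to (z-z_0)^{-1}$ uniformly on $U$ with $z_0\in D(0,e^{-T})\setminus\overline U$, so $\lvert p_k\rvert_{L^\infty(U)}$ stays bounded while $\lvert p_k\rvert_{L^2(D(0,e^{-T}))}\to\infty$. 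The coefficients of these $p_k$ are not wave-packet-like at all; the soft existence via Runge is precisely what circumvents the cancellation estimate you were trying to prove by hand. A secondary technical point: the error between $v_n$ and its Gaussian model is not just $O(e^{-cn})$ on the eigenvalue side; the prefactor $w_n(x)$ and the shift $\rho_n$ must be shown to act as uniformly bounded ``symbol'' multipliers on polynomials (Lemmas~\ref{th:est_hol} and Theorem~\ref{th:default} in the paper), which is more delicate than the pointwise bound you state.
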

We prove this Theorem in Section~\ref{sec:neg}.
Note that this Theorem stays true if we replace the domain $\Omega$ by $\set R\times (0,\pi)$. In particular, we have the following Theorem~\ref{th:main_neg_R}, which answers a question that was asked to the second author by Yves Colin de~Verdière.

\begin{figure}
\centering
\begin{tikzpicture}[scale = 2.5]
  \draw (-2,0) rectangle (2,1);
  \draw[fill = green!50] (-0.5,0) -- (-1.8,0) .. controls 
(-2,0.2) and (-2,0.6) .. (-1.8,0.7) .. controls 
(-1.6,0.9) and (-0.6,0.9) ..  (-0.6,0.7) .. controls 
(-0.6,0.6) and (-1.4,0.5) .. (-1.4,0.2) .. controls (-1.4,0.1) .. cycle;
  \draw[fill = green!50]  (1.8,0.3) .. controls 
(2,0.3) and (2,0.4) .. (1.8,1) --  (1,1) .. controls 
(1,0.3) and (1.5,0.3) .. cycle;
  \node (a) at (-0.5,1.2) {$\omega$};
  \draw[->] (a) -- (-1.3,0.7);
  \draw[->] (a) -- (1.6,0.7); 
  \draw[thick] (-1.4,0.2) -- (1.4,0.2);
  \draw[thick, dashed] (0,0.2) -- (2,0.2) node[right]{$y_0$};
  \draw[dashed, thick] (-1.4,0) node[below left = 0 and -0.2]{$-a$} -- (-1.4,0.2);
  \draw[dashed, thick] (1.4,0) node[below right = 0.05 and -0.1]{$a$} -- 
(1.4,0.2);
 \draw[->] (-2.2,0) -- (2.2,0) node[above]{$x$};
 \draw[->] (0,-0.2) -- (0,1.2) node[right]{$y$};
\end{tikzpicture}
\caption{In green, an example of a domain $\omega$ for Theorem~\ref{th:main_negative}. If we have a symmetric horizontal segment of length $2a$ that does not touch $\omega$ (except maybe the extremities), the Grushin equation is not null-controllable in time $T<a^2\!/2$.}
\label{fig:omega_neg_th}
\end{figure}
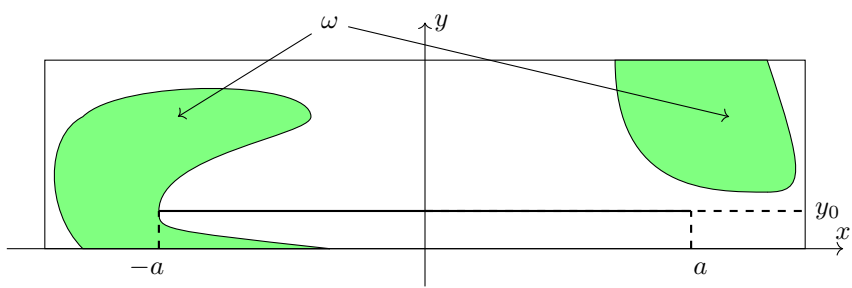

\begin{theorem}[Negative result on the whole real line]\label{th:main_neg_R}
Let $y_0\in(0,\pi)$, $f\colon\set R\to \set R_+^\star$ a continuous function that is never zero and $\omega = \{(x,y)\in \set R\times (0,\pi), |y-y_0|> f(x)\}$  (see Fig. \ref{fig:omega_R}). Then Grushin equation on $\Omega = \set R\times(0,\pi)$ is never null-controllable on $\omega$.
\end{theorem}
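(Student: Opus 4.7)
The plan is to reduce Theorem~\ref{th:main_neg_R} to the negative result of Theorem~\ref{th:main_negative}, which the authors note remains valid when $\Omega$ is replaced by the infinite strip $\set R\times(0,\pi)$. Granting that extension, the argument proceeds in three short steps.

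First, I would record the key geometric observation: since $f$ is continuous and strictly positive, for every $a>0$ the quantity $\delta_a\coloneqq\min_{|x|\leq a}f(x)$ is strictly positive. Any point $(x,y)$ of $\omega$ with $|x|\leq a$ therefore satisfies $|y-y_0|>\delta_a$, and this inequality survives in the closure. In particular the open horizontal segment $\{(x,y_0):|x|<a\}$ stays at distance at least $\delta_a$ from $\overline{\omega}$, hence is disjoint from it.

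Second, I would fix an arbitrary $T>0$ and choose $a>\sqrt{2T}$, so that $T<a^2\!/2$. Applying the $\set R\times(0,\pi)$ version of Theorem~\ref{th:main_negative} with this $a$ immediately yields the failure of null-controllability on $\omega$ in time $T$. Since $T>0$ was arbitrary, null-controllability never holds.

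The only real work lies in justifying the parenthetical extension of Theorem~\ref{th:main_negative} to $\set R\times(0,\pi)$. The argument of Section~\ref{sec:neg} proceeds by Fourier-expanding in $y$ and then, at each frequency $n$, interpreting the resulting observability inequality as an $L^2$ estimate on functions assembled from the eigenfunctions of $-\partial_x^2+n^2 x^2$. On the line, this operator is the quantum harmonic oscillator: its spectrum is exactly $\{n(2k+1):k\geq 0\}$ and its eigenfunctions are rescaled Hermite functions enjoying Gaussian decay sharper than their Dirichlet analogues on $(-1,1)$. I expect the reduction to the complex-polynomial $L^2$ estimate at the heart of~\cite{koenig_2017} to carry over essentially unchanged. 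The main obstacle I anticipate is technical rather than conceptual: one must check that the Fourier--Hermite expansion fits the weak-solution framework~\eqref{prop:well} adapted to the unbounded strip, and that the resulting lower bound on the observability constant still forces $T\geq a^2\!/2$.
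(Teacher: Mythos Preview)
Your reduction is correct and matches the paper's own framing: right after stating Theorem~\ref{th:main_negative} the authors remark that it remains true on $\set R\times(0,\pi)$, and Theorem~\ref{th:main_neg_R} is then an immediate corollary exactly as you describe (for every $a>0$ the segment $\{(x,y_0):|x|<a\}$ misses $\overline\omega$, so take $a$ arbitrarily large). The paper's Appendix~\ref{sec:neg_R} carries out the Section~\ref{sec:neg} computation directly for this specific $\omega$ rather than first proving a general $\set R$-version of Theorem~\ref{th:main_negative}, but the underlying work is the same.

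One point you understate: on $\set R$ the argument is genuinely \emph{simpler}, not merely ``essentially unchanged''. Only the ground state of $-\partial_x^2+(nx)^2$ is used, and on the line this is exactly the Gaussian $v_n(x)=\eu^{-nx^2/2}$ with eigenvalue exactly $\lambda_n=n$. Thus $\rho_n=0$ and (taking $\varepsilon=0$) $w_n\equiv 1$, so Lemma~\ref{th:est_hol} and the whole symbol machinery of Section~\ref{sec:lemma_hol_proof} become unnecessary; the change of variables $z_x=\eu^{-t+\iu y-x^2/2}$ leads directly to a polynomial estimate that Runge's theorem disproves. Your mention of the full Hermite spectrum $\{n(2k+1)\}$ is a red herring here.
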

We sketch the proof of this Theorem in Appendix~\ref{sec:neg_R}.
Note that both of these negative results are valid if we take $y\in \set R\slash2\pi\set Z$ instead of $y\in(0,\pi)$.

\begin{figure}
\centering
\begin{tikzpicture}[scale = 2.5]
\newcommand{\topboundary}{ (2,0.47) .. controls (0.1,0.5) and (0.3,0.8) .. (0,0.8) .. controls (-0.2,0.8) and (-0.1,0.5) .. (-2,0.47)}
\fill[fill=green!50] (-2,1) --  (2,1) -- \topboundary  -- cycle;
\draw \topboundary;
\newcommand{\bottomboundary}{ (2,0.43) .. controls (0.1,0.4) and (0.3,0.1) .. (0,0.1) .. controls (-0.2,0.1) and (-0.1,0.4) .. (-2,0.43)}
\fill[fill=green!50] (-2,0) --  (2,0) -- \bottomboundary -- cycle;
\draw \bottomboundary;
\draw[dashed] (-2,0.45) -- (2,0.45) node[right]{$y_0$};
  \draw (-2,0) -- (2,0) ( -2,1) -- (2,1);
 \draw[->] (-2.2,0) -- (2.2,0) node[above]{$x$};
 \draw[->] (0,-0.2) -- (0,1.2) node[right]{$y$};
\node (v1) at (0.6,1.1) {$\omega$};

\draw[->] (v1) -- (1.3,0.8);
\draw[->] (v1) -- (1.3,0.1);
\end{tikzpicture}
\caption{In green, the domain $\omega$ in Theorem~\ref{th:main_neg_R}. Even if when $|x|$ tends to $\infty$, the complement of $\omega$ narrows, the Grushin equation is never null-controllable.}
\label{fig:omega_R}
\end{figure}
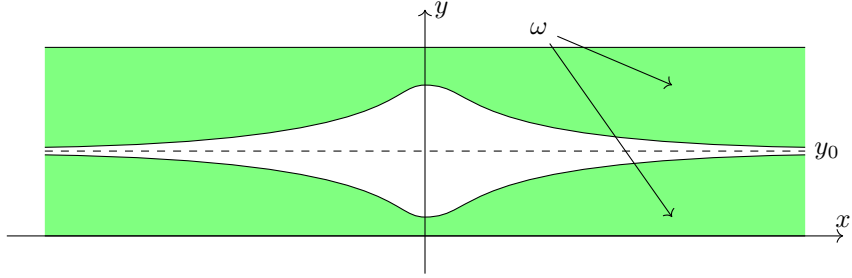

With Theorem~\ref{th:main_postiive}, we can (often) find an upper bound on the minimal time of control, and with Theorem~\ref{th:main_negative}, we can lower bound it. If these two bounds coincide, then we have the actual minimal time of control. We can prove this is the case for a large case of control domains, for instance:
\begin{corollary}\label{cor}
Let $\gamma_1$ and $\gamma_2$ be two continuous functions from $[0,\pi]$ to $(-1,1)$ such that $\gamma_1<\gamma_2$, let $\omega = \{(x,y):\gamma_1(y) < x < \gamma_2(y)\}$, and let\footnote{We denote $\gamma^+ = \max(0,\gamma)$ and $\gamma^- = \max(0,-\gamma)$.} $a = \max(\max (\gamma_2^-), \max (\gamma_1^+))$. One has:
\begin{enumerate}
\item if $T>a^2\!/2$, then the Grushin equation~\eqref{eq:control_problem_eng} is null-controllable in time $T$;
\item if $T<a^2\!/2$, then the Grushin equation~\eqref{eq:control_problem_eng} is not null-controllable in time $T$.
\end{enumerate}
\end{corollary}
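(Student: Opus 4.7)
The plan is to reduce (i) to Theorem~\ref{th:main_postiive} and (ii) to Theorem~\ref{th:main_negative} via elementary geometric constructions; all the analytic content is already contained in those two theorems.

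For (i), fix $T > a^2/2$ and choose $\delta > 0$ with $(a+\delta)^2/2 < T$. The key observation is that $\gamma_1^+$ and $\gamma_2^-$ cannot be simultaneously positive at the same $y$ (since $\gamma_1 < \gamma_2$), so for each $y \in [0,\pi]$ the open interval $(\gamma_1(y),\gamma_2(y)) \cap (-a-\delta, a+\delta)$ is nonempty: if $\gamma_1(y) \geq 0$ then $\gamma_1(y) = \gamma_1^+(y) \leq a < a+\delta$; if $\gamma_2(y) \leq 0$ then $\gamma_2(y) \geq -a > -a-\delta$; otherwise $0$ lies in the intersection. Setting
\[
x(y) \coloneqq \tfrac{1}{2}\bigl(\max(\gamma_1(y),-a-\delta) + \min(\gamma_2(y),a+\delta)\bigr)
\]
yields a continuous function with $\gamma_1(y) < x(y) < \gamma_2(y)$ and $|x(y)| \leq a+\delta$. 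The curve $s \mapsto (x(\pi s),\pi s)$, $s \in [0,1]$, is then continuous from $[0,1]$ into $\overline{\Omega}$, joins a point of $(-1,1)\times\{0\}$ to a point of $(-1,1)\times\{\pi\}$, and its image is a compact subset of $\omega$ away from the two endpoints; compactness then produces $\varepsilon > 0$ such that an $\varepsilon$-neighborhood in $\Omega$ lies inside $\omega$. Theorem~\ref{th:main_postiive} applied to this curve gives null-controllability in time $T$.

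For (ii), fix $T < a^2/2$ and choose $\eta > 0$ such that $T < (a-\eta)^2/2$. The continuous function $y \mapsto \max(\gamma_2^-(y), \gamma_1^+(y))$ attains its maximum $a$ on $[0,\pi]$, so by continuity there exists $y_0 \in (0,\pi)$ with this quantity strictly greater than $a - \eta$. Using the invariance of the Grushin equation under $x \mapsto -x$ (which swaps $\gamma_1^+$ with $\gamma_2^-$), we may assume $\gamma_1^+(y_0) > a-\eta$, that is, $\gamma_1(y_0) > a - \eta$. By continuity of $\gamma_1$ and $\gamma_2$ every point $(x,y_0) \in \overline{\omega}$ satisfies $\gamma_1(y_0) \leq x \leq \gamma_2(y_0)$, hence $x > a - \eta$, so the open segment $\{(x,y_0) : |x| < a-\eta\}$ is disjoint from $\overline{\omega}$. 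Theorem~\ref{th:main_negative} applied with $a - \eta$ in place of $a$ then rules out null-controllability in time $T$.

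The argument presents no real obstacle: the only conceptual step is the realization that the specific form of $a = \max(\max(\gamma_2^-),\max(\gamma_1^+))$ is precisely what guarantees both the vertical path of abscissa at most $a+\delta$ in $\omega$ used for (i) and the horizontal segment of length close to $2a$ at an interior height avoided by $\overline{\omega}$ used for (ii). If there is any subtlety, it lies in the elementary construction of $x(y)$: one must resist the temptation to pick the midpoint of $(\gamma_1(y),\gamma_2(y))$, which could wander far from $\{x=0\}$, and instead truncate it against the target band $(-a-\delta, a+\delta)$.
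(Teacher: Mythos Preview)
Your proof is correct and follows essentially the same route as the paper: your $x(y)$ is exactly the paper's midpoint $(\tilde\gamma_1+\tilde\gamma_2)/2$ with $\tilde\gamma_1=\max(\gamma_1,-a-\delta)$ and $\tilde\gamma_2=\min(\gamma_2,a+\delta)$. The only cosmetic differences are that the paper obtains the $\varepsilon$-tube from the explicit lower bound $\tilde\gamma_2-\tilde\gamma_1\geq\varepsilon$ (hence $\gamma_1+\varepsilon/2\leq x(y)\leq\gamma_2-\varepsilon/2$) rather than a compactness argument, and for~(ii) handles the two cases $\gamma_1(y_\varepsilon)>a-\varepsilon$ and $\gamma_2(y_\varepsilon)<-a+\varepsilon$ simultaneously instead of invoking the $x\mapsto -x$ symmetry.
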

We prove this Corollary in Section~\ref{sec:cor}.

\section{Comments and open problems}
Note that most of the existing results for the controllability of degenerate parabolic equations (see section~\ref{sec:bib com}) were only concerned with rectangular control domains. The reason is that these results were based on Fourier series techniques, which can only treat tensorised domains. Our results are built on the previous ones, but by adding some arguments (the fictitious control method for the positive result, and fully treating $x$ as a parameter in the negative one) we can accurately treat a large class of non-rectangular domains. But even then, there remains some open problems.

\begin{figure}
\centering
    \begin{subfigure}[t]{0.3\textwidth}
    \begin{tikzpicture}[scale=2]
\draw (-1,0) -- (1,0) -- (1,1) -- (-1,1) -- (-1,0);
\draw[->] (-1.2,0) -- (1.2,0);
\draw[->] (0,-0.2) -- (0,1.2);
\draw  (-0.75,0) -- (-0.75,0.75) -- (-0.25,0.75) -- (-0.25,0) -- (-0.75,0);
\draw  (0.75,1) -- (0.75,0.25) -- (0.25,0.25) -- (0.25,1) -- (0.75,1);
\fill[opacity=0.5,color=green]  (-0.75,0) -- (-0.75,0.75) -- (-0.25,0.75) -- (-0.25,0) -- (-0.75,0);
\fill[opacity=0.5,color=green]  (0.75,1) -- (0.75,0.25) -- (0.25,0.25) -- (0.25,1) -- (0.75,1);
\path (-0.5,0.25) node {$\omega$};
\path (0.5,0.75) node {$\omega$};
\path (-0.25,-0.1) node {$-a$};
\end{tikzpicture}
    \caption{We can't go from the bottom boundary to the top boundary while staying inside $\omega$.}
    \label{fig:example.1}
\end{subfigure}~
\begin{subfigure}[t]{0.3\textwidth}
    \begin{tikzpicture}[scale=2]
\draw[->] (-1.2,0) -- (1.2,0);
\draw[->] (0,-0.2) -- (0,1.2);
\draw[dotted] (-0.5,0) -- (-0.5,0.5) ;
\draw[fill = green!50]  (-0.8,0) -- (-0.2,1) -- (-0.8,1) -- (-0.2,0) -- cycle;
\path (-0.6,0.15) node {$\omega$};
\path (-0.5,0.75) node {$\omega$};
\draw[thick, dashed] (-0.5,0.5) -- (-0.5,0) node[below] {$-a$};
\draw (-1,0) -- (1,0) -- (1,1) -- (-1,1) -- (-1,0);
\end{tikzpicture}
    \caption{A pinched domain.}
    \label{fig:example.2}
\end{subfigure}~
\begin{subfigure}[t]{0.3\textwidth}
    \begin{tikzpicture}[scale=2]
\draw (-1,0) -- (1,0) -- (1,1) -- (-1,1) -- (-1,0);
\draw[->] (-1.2,0) -- (1.2,0);
\draw[->] (0,-0.2) -- (0,1.2);
\draw[dashed, thick] (-0.5,0) -- (-0.5,0.375) ;
\draw[dashed, thick] (-0.75,0) -- (-0.75,0.625) ;
\draw  (-1,0) -- (-0.25,0) -- (-0.25,0.125) -- (-0.75,0.625) -- (-0.75,0.875) -- (-0.25,0.375) -- (-0.25,1) -- (-1,1) -- (-1,0);
\fill[opacity=0.5,color=gray,color=green]  (-1,0) -- (-0.25,0) -- (-0.25,0.125) -- (-0.75,0.625) -- (-0.75,0.875) -- (-0.25,0.375) -- (-0.25,1) -- (-1,1) -- (-1,0);
\path (-0.45,0.8) node {$\omega$};
\path (-0.5,-0.1) node {$-a$};
\path (-0.75,-0.1) node {$-b$};
\end{tikzpicture}
    \caption{A cave.}
    \label{fig:example.3}
\end{subfigure}
\caption{Examples of control region for which the minimal time of null-controllability is unknown.}
\end{figure}

If the control region $\omega$ if not connected, 
the positive result of Theorem~\ref{th:main_postiive} might not apply. For example in Figure~\ref{fig:example.1}, 
the negative result of Theorem~\ref{th:main_negative}
says only that the minimal time for the null-controllability of the Grushin equation is greater than $a^2\!/2$, 
but we don't know if the Grushin equation is null-controllable in any time greater than $a^2\!/2$ or not.

If the domain is ``pinched'', as in Figure~\ref{fig:example.2}, 
the open segment $\{(x,\pi/2):-1<x<1\}$ is disjoint from $\omega$ but not from $\overline{\omega}$: we are in a limit case of Theorem~\ref{th:main_negative}.
Again, we only know that the minimal time is greater than $a^2\!/2$, but we don't know if the Grushin equation is ever null-controllable at all.

For $\omega:=\{(x,y):\gamma_1(y)<x<\gamma_2(y)\}$ with
$\gamma_1,\gamma_2\in C([0,\pi];(-1,1))$ 
such that $\gamma_1<\gamma_2$,  Corollary~\ref{cor}
 determines the minimal time  which is not necessary the case 
if the control region  does not have this form.
For instance, if there is a ``cave'' in the control domain, as in Figure~\ref{fig:example.3}, 
the results of the present paper only ensure that the minimal time
is greater or equal than $a^2\!/2$ and smaller or equal than $b^2\!/2$.

The null-controllability in the critical time $T=a^2\!/2$ in Corollary~\ref{cor} also remains an open problem. 
Since for $\omega:=[(-b,-a)\cup(a,b)]\times(0,1)$ with $0<a<b\leq 1$
the minimal time is equal to $a^2\!/2$ and the Grushin equation is not 
null-controllable in this time~\cite{beauchard_2015}, 
we can conjecture that it is also the case in Corollary~\ref{cor}.

Finally, we have a positive result for the generalized Grushin equation $\partial_t -\partial_x^2 - q(x)^2\partial_y^2$ (Remark~\ref{rk:generalized_grushin}), but we lack a negative result corresponding to Theorem~\ref{th:main_negative}. If we had one, then we would also have a corollary similar to Corollary~\ref{cor}, with the minimal time being $q'(0)^{-1}\int_0^a q(s)\diff s$. 

\section{Proof of the positive result}\label{sec:pos}

This section is devoted to the proof of Theorem~\ref{th:main_postiive}.
To this end, we will adapt to our setting the fictitious control method 
which has already been used for partial differential equations for instance  in \cite{gonzalezperez2006}, \cite{coronlissy2014}, \cite{duprez2016positive} and \cite{ammar2007}. 

The strategy of the fictitious control method consists in 
building a solution of the control problem thanks to algebraic combinations of solutions to controlled problems.
We say the initial controls of the controlled problems are fictitious since they do not appear explicitly in the final equation.
The final controls will have less constraints than the initial ones,
they can have less components as in \cite{gonzalezperez2006,coronlissy2014,duprez2016positive} or a smaller support as in \cite{ammar2007} and in our case.

To build the control for the Grushin equation~\eqref{eq:control_problem_eng} thanks to the fictitious control method, we will use a previous result:

\begin{theorem}\label{th:equiv}
Assume that $\omega:=(-1,-a)\times(0,\pi)$ with $a\in(0,1)$. Then for each $T>a^2\!/2$, System~\eqref{eq:control_problem_eng} is null-controllable in time $T$.
\end{theorem}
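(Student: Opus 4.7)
The plan is to proceed by duality and Fourier decomposition in the $y$ variable. By the Hilbert Uniqueness Method, null-controllability in time $T$ is equivalent to the observability inequality
\[
\|g(T,\cdot)\|_{L^2(\Omega)}^2 \leq C_T \int_0^T\!\!\int_{\omega} |g(t,x,y)|^2 \diff x\diff y\diff t
\]
for all solutions $g$ of the adjoint equation $\partial_t g = \partial_x^2 g + x^2\partial_y^2 g$ on $\Omega$ with Dirichlet boundary conditions. Since $\omega = (-1,-a)\times(0,\pi)$ is $y$-invariant, expanding $g(t,x,y) = \sum_{n\geq 1} g_n(t,x)\sin(ny)$ decouples the problem into a family of one-dimensional equations
\[
\partial_t g_n = \partial_x^2 g_n - n^2 x^2 g_n \quad \text{on } (-1,1), \qquad g_n(t,\pm 1) = 0,
\]
and it suffices to establish a one-dimensional observability inequality on $(-1,-a)$ whose constant grows in $n$ no faster than the parabolic dissipation can absorb.

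Next, I would analyse the spectrum of $A_n \coloneqq -\partial_x^2 + n^2 x^2$ on $H_0^1(-1,1)$. In the semiclassical limit $h = 1/n \to 0$, $A_n/n$ is close to the harmonic oscillator on $\set R$, so its eigenvalues satisfy $\lambda_{n,k} = n(2k-1) + O(\eu^{-cn})$ and the normalised eigenfunctions $\phi_{n,k}$ are close to rescaled Hermite functions. In particular, one has the Agmon-type pointwise bound $|\phi_{n,k}(x)| \leq C_k\,n^{1/4}\eu^{-nx^2/2}$ outside a small neighbourhood of $0$, from which one derives $\|\phi_{n,k}\|_{L^2(-1,-a)}^2 \lesssim n^{-1/2}\eu^{-n a^2}$. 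This exponential smallness of eigenmodes on the observation region is exactly what forces a minimal control time.

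With this spectral information in hand, a Lebeau--Robbiano block-decomposition argument, or equivalently a global Carleman estimate with a weight capturing the Agmon distance $x^2/2$ to the line $\{x=0\}$, yields for every $\tau\in(0,T)$ a short-time observability inequality for each Fourier mode of the form
\[
\|g_n(\tau,\cdot)\|_{L^2(-1,1)}^2 \leq K_\tau\, \eu^{n a^2}\int_0^\tau\!\!\int_{-1}^{-a}|g_n(t,x)|^2 \diff x\diff t.
\]
Obtaining the sharp constant $\eu^{n a^2}$ (as opposed to a larger one) is the heart of the argument and is precisely what \cite{beauchard_2018} achieves.

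To conclude, I would combine this with the free dissipation estimate $\|g_n(T,\cdot)\|^2 \leq \eu^{-2\lambda_{n,1}(T-\tau)}\|g_n(\tau,\cdot)\|^2 \lesssim \eu^{-2n(T-\tau)}\|g_n(\tau,\cdot)\|^2$ on $[\tau,T]$, and sum over $n$ using orthogonality of $(\sin(ny))_{n\geq 1}$ both on $\Omega$ and on $\omega$:
\[
\|g(T)\|_{L^2(\Omega)}^2 \lesssim \sum_{n\geq 1} K_\tau\, \eu^{n(a^2 - 2(T-\tau))}\int_0^\tau\!\!\int_{-1}^{-a}|g_n(t,x)|^2\diff x\diff t.
\]
The geometric factor is summable as soon as $2(T-\tau)>a^2$; letting $\tau\to 0^+$ gives the sharp threshold $T>a^2/2$. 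The main obstacle throughout is the sharp exponent: standard energy estimates or generic global Carleman inequalities yield only some unspecified minimal time, while the optimal value $a^2/2$ demands either precise WKB/Agmon analysis of the Dirichlet Hermite functions $\phi_{n,k}$ or a Carleman weight finely tuned to the harmonic-oscillator potential $n^2 x^2$.
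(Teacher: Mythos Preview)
Your outline is essentially sound, but it follows a different route from the paper's, and there is one step you have elided that turns out to be exactly the content of the paper's proof.

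The paper does \emph{not} prove Theorem~\ref{th:equiv} by establishing an internal observability inequality via Fourier decomposition and mode-wise dissipation. Instead it works on the control side: it takes as a black box the \emph{boundary} null-controllability of the Grushin equation on the truncated domain $\Omega_L=(-L,1)\times(0,\pi)$ with control on $\Gamma_L=\{-L\}\times(0,\pi)$ for $T>L^2/2$ (this is the result of \cite{beauchard_2018}), and then converts this boundary controllability into internal controllability on the strip $(-1,-a)\times(0,\pi)$ by a cutoff/extension argument. Concretely, the paper sets $L=a+\varepsilon/2$, takes the boundary-controlled trajectory $f_{\text{boundary}}$ on $\Omega_L$ and the free solution $f_{\text{free}}$ on $\Omega$, and glues them as $\eta\theta_1 f_{\text{free}}+(1-\theta_1)f_{\text{boundary}}$; the resulting forcing is supported in the strip. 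Because $f_{\text{boundary}}$ is only an $L^2$ transposition solution, the first forcing lands merely in $L^2(0,T;V(\Omega)')$, so a second cutoff $\theta_2$ is needed to upgrade it to an $L^2$ control.

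Your approach is the dual-side version: observability, Fourier in $y$, then the Lebeau--Robbiano dissipation trick. This is in the spirit of how \cite{beauchard_2014,beauchard_2015,beauchard_2018} themselves proceed, and it is certainly valid in principle. Note, however, that when you write ``obtaining the sharp constant $\eu^{na^2}$ \ldots\ is precisely what \cite{beauchard_2018} achieves,'' what \cite{beauchard_2018} actually delivers is a \emph{boundary} observability inequality (dually, boundary control on $\Gamma_L$), not the internal one on $(-1,-a)$ that you need. Passing from the former to the latter is standard for nondegenerate heat equations, but for the Grushin operator it requires exactly the cutoff argument the paper carries out in Appendix~\ref{sec:int_control} (and the double-cutoff to fix the regularity of the control). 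So your sketch and the paper's proof are complementary: you describe how to get from mode-wise observability to the full result, while the paper describes how to pass from the available boundary result of \cite{beauchard_2018} to the internal statement. A minor phrasing point: you cannot literally let $\tau\to0^+$ since $K_\tau$ blows up; rather, for given $T>a^2/2$ you fix some $\tau>0$ with $T-\tau>a^2/2$.
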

The idea of the proof of Theorem~\ref{th:equiv} is the following:
The observability estimates obtained in \cite[Th. 1.4]{beauchard_2018} can be interpreted in terms of boundary controllability of System~\eqref{eq:control_problem_eng}
in $\Omega_a:=(-a,0)\times(0,\pi)$ and acting on the subset of the boundary $\{-a\}\times(0,\pi)\subset\partial\Omega_a$, which implies by a cutoff argument the internal controllability  of System~\eqref{eq:control_problem_eng} by acting on $(-1,-a+\varepsilon)\times(0,\pi)$ with $a\in(0,1)$:
A detailed proof of Theorem~\ref{th:equiv} is given in Appendix~\ref{sec:int_control}.

\begin{remark}
\begin{enumerate}
\item Thanks to \cite[Th. 1.4]{beauchard_2018}, we could also easily prove that the System~\eqref{eq:control_problem_eng} is not null-controllable on $(-1,-a)\times(0,\pi)$ in time $T<a^2\!/2$, but this is superseded by Theorem~\ref{th:main_negative} anyway. 

\item The equivalence between the boundary controllability and the internal
controllability is standard for the heat equation (see for instance
\cite[Theorem 2.2]{ammar2007}). However, the equivalence proved in the
reference is for $H^{1/2}$ boundary controls, but not in $L^2$. Moreover, the
Grushin equation is degenerate, so we can't directly apply the theorem of the reference. So we will prove Theorem~\ref{th:equiv} in Appendix~\ref{sec:int_control}.
\end{enumerate}
\end{remark}

\begin{proof}[Proof of Theorem~\ref{th:main_postiive}]
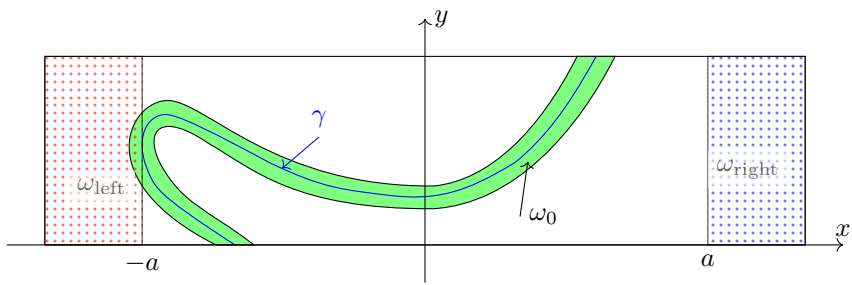
\begin{figure}
\centering
\begin{tikzpicture}[scale=2.5]
\draw[fill=green!50] (-1.1,0) .. controls (-1.7767,0.3453) and (-1.5545,0.7659) .. (-1.3545,0.7659) .. controls (-1.1545,0.7659) and (-0.7992,0.3044) .. (0.0481,0.3126) .. controls (0.3371,0.3435) and (0.6,0.6) .. (0.8,1) .. controls (0.8,1) and (1,1) .. (1,1) .. controls (0.8,0.6) and (0.4668,0.2183) .. (0.0444,0.192) .. controls (-0.6881,0.1878) and (-0.9,0.4) .. (-1.1451,0.5424) .. controls (-1.3885,0.7097) and (-1.4759,0.6101) .. (-1.3977,0.4409) .. controls (-1.2759,0.2101) and (-1.1465,0.1925) .. (-0.9,0) .. controls (-0.9,0) and (-1.1,0) .. (-1.1,0);
\node[below] at (-1.488,-0) {$-a$};
\node[below] at (1.488,-0) {$a$};

  \draw (-2,0) rectangle (2,1);
 \draw[->] (-2.2,0) -- (2.2,0) node[above]{$x$};
 \draw[->] (0,-0.2) -- (0,1.2) node[right]{$y$};
\draw[->] (0.5,0.15)node[right]{$\omega_0$} -- (0.5407,0.4428);
\draw[blue] (-1,0) .. controls (-1.4332,0.2846) and (-1.4225,0.299) .. (-1.4743,0.4381) .. controls (-1.5265,0.6283) and (-1.4179,0.7119) .. (-1.337,0.6876) .. controls (-1.2643,0.6719) and (-1.1059,0.5927) .. (-0.817,0.4359) .. controls (-0.5284,0.2964) and (-0.4264,0.2928) .. (-0.1871,0.2626) .. controls (0.0494,0.2351) and (0.2062,0.2708) .. (0.4758,0.4386) .. controls (0.6298,0.5679) and (0.7454,0.7164) .. (0.9,1);
\draw[blue,->] (-0.5569,0.5726)node[above]{$\gamma$} -- (-0.7525,0.4054);
\filldraw[opacity=0.75,pattern=dots,pattern color=red]  (-2,0) -- (-1.488,0) -- (-1.488,1) -- (-2,1) -- (-2,0);
\filldraw[opacity=0.75,pattern=dots,pattern color=blue]  (1.488,0) -- (2,0) -- (2,1) -- (1.488,1) -- (1.488,0);

\node[below, fill=white, fill opacity = 0.6, text opacity = 1] at (-1.7,0.4) {$\omega_{\text{left}}$};
\node[below, fill=white, fill opacity = 0.6, text opacity = 1] at (1.7,0.5) {$\omega_{\text{right}}$};
\end{tikzpicture}
\caption{Definition of $\omega_{\text{left}}$ (red) and $\omega_{\text{right}}$ (blue).}
\label{fig:omega 0 left right}
\end{figure}
Consider $\omega_0$ and $a$ given in~\eqref{omega_0} and~\eqref{def a}. Let $T>a^2\!/2$. We define

\begin{equation*}
\left\{\begin{array}{l}
\omega_{\text{left}}:=(-1,-a)\times(0,\pi),\\
\omega_{\text{right}}:=(a,1)\times(0,\pi).
\end{array}\right.
\end{equation*}
Such construction is illustrated in Figure~\ref{fig:omega 0 left right}.
Consider the following control problems
\begin{equation}\label{eq:f1}
\left\{\begin{array}{ll}
(\partial_t - \partial_x^2 - x^2\partial_y^2)f_{\text{left}}(t,x,y)  = \mathbf 1_{\omega_{\text{left}}} u_{\text{left}}(t,x,y) 
&  t\in [0,T], (x,y)\in \Omega\\
f_{\text{left}}(t,x,y) = 0 &  t \in [0,T], (x,y)\in \partial\Omega\\
f_{\text{left}}(0,x,y) = f_0(x,y),~f_{\text{left}}(T,x,y) = 0& (x,y)\in \Omega
\end{array}\right.
\end{equation}
and
\begin{equation}\label{eq:f2}
\left\{\begin{array}{ll}
(\partial_t - \partial_x^2 - x^2\partial_y^2)f_{\text{right}}(t,x,y)  = \mathbf 1_{\omega_{\text{right}}} u_{\text{right}}(t,x,y) 
&  t\in [0,T], (x,y)\in \Omega\\
f_{\text{right}}(t,x,y) = 0 &  t \in [0,T], (x,y)\in \partial\Omega\\
f_{\text{right}}(0,x,y) = f_0(x,y),~f_{\text{right}}(T,x,y) = 0& (x,y)\in \Omega.
\end{array}\right.
\end{equation}
Since $T>a^2\!/2$, according to Theorem~\ref{th:equiv}, null-controllability problems~\eqref{eq:f1} and~\eqref{eq:f2} admit solutions.
We now glue this two solutions $f_{\text{left}}$ and $f_{\text{right}}$ with an appropriate cutoff, given by Lemma~\ref{th:cutoff}:
\begin{lemma}\label{th:cutoff}
There exists a function $\theta\in C^\infty(\overline{\Omega})$ such that

\begin{equation*}
\left\{\begin{array}{l}
\theta(z) = 0\mbox{ for all }z\in\omega_{\text{\rm left}}\setminus\omega_0,\\
\theta(z) = 1\mbox{ for all }z\in\omega_{\text{\rm right}}\setminus\omega_0,\\
\supp(\nabla \theta) \subset \omega_0.
\end{array}\right.
\end{equation*}
\end{lemma}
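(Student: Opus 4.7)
The plan is to exploit the topological fact that the $\varepsilon$-tube $\omega_0$ around $\gamma$ separates $\overline{\Omega}$ into a ``left'' piece containing $\omega_{\text{left}}\setminus\omega_0$ and a ``right'' piece containing $\omega_{\text{right}}\setminus\omega_0$, then build $\theta$ by smoothly interpolating from $0$ on the left to $1$ on the right via a smooth Urysohn construction.

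Fix $\eta\in(0,\varepsilon)$ and let $V_\eta\coloneqq\{z\in\overline{\Omega}:\distance(z,\ran(\gamma))<\eta\}$, so that $\overline{V_\eta}\subset\omega_0$. The key topological claim is that $\overline{\Omega}\setminus\overline{V_\eta}$ decomposes as a disjoint union $U_L\sqcup U_R$ of two relatively open subsets, each one being the union of those connected components lying respectively to the left and to the right of $\ran(\gamma)$, and satisfying $\omega_{\text{left}}\setminus\omega_0\subset U_L$ and $\omega_{\text{right}}\setminus\omega_0\subset U_R$.

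Granting this, set $K_L\coloneqq\overline{U_L}\setminus\omega_0$ and $K_R\coloneqq\overline{U_R}\setminus\omega_0$. These are disjoint compact subsets of $\overline{\Omega}$ (disjoint because $\overline{U_L}\cap\overline{U_R}\subset\overline{V_\eta}\subset\omega_0$) whose union covers $\overline{\Omega}\setminus\omega_0$, and they respectively contain $\omega_{\text{left}}\setminus\omega_0$ and $\omega_{\text{right}}\setminus\omega_0$. The smooth Urysohn lemma in $\set R^2$, applied to $K_L$ and $K_R$, produces $\tilde\theta\in C^\infty(\set R^2,[0,1])$ vanishing on an open neighborhood $W_L\supset K_L$ and equal to $1$ on a disjoint open neighborhood $W_R\supset K_R$; the restriction $\theta\coloneqq\tilde\theta|_{\overline{\Omega}}$ then satisfies all three required properties, namely $\theta=0$ on $\omega_{\text{left}}\setminus\omega_0\subset K_L\subset W_L$, $\theta=1$ on $\omega_{\text{right}}\setminus\omega_0\subset K_R\subset W_R$, and $\nabla\theta$ vanishes on the open set $W_L\cup W_R\supset\overline{\Omega}\setminus\omega_0$, so that $\supp(\nabla\theta)\subset\omega_0$.

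The main obstacle is the planar separation claim. For a simple arc it follows from the Jordan curve theorem, after completing the arc with a suitable piece of $\partial\Omega$ into a Jordan curve enclosing one of the two sides. Here $\gamma$ is only continuous and possibly self-intersecting, but one reduces to the simple case by approximation: one chooses a piecewise-linear simple arc $\tilde\gamma\subset V_\eta$ joining a point of the bottom side of $\overline{\Omega}$ to a point of the top side, which exists because $V_\eta$ is open and path-connected and hence contains a polygonal simple arc between any two of its points. Jordan's theorem applied to $\tilde\gamma$ splits $\overline{\Omega}\setminus\tilde\gamma$ into two connected components (one containing the left edge $\{-1\}\times(0,\pi)$, the other the right edge $\{1\}\times(0,\pi)$), and restricting this partition to $\overline{\Omega}\setminus\overline{V_\eta}\subset\overline{\Omega}\setminus\tilde\gamma$ gives the desired $U_L\sqcup U_R$.
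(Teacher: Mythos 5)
Your route (Jordan-arc separation plus smooth Urysohn) is genuinely different from the paper's, which extends $\gamma$ to a proper curve $\tilde\gamma\colon\set R\to\set R^2$, takes $\tilde\theta=\mathbf 1_{\tilde\omega_{\text{right}}}$ where $\tilde\omega_{\text{right}}$ is the connected component of $\set R^2\setminus\tilde\gamma(\set R)$ containing $(2,0)$, and mollifies by $\rho_{\varepsilon}$ supported in $B(0,\varepsilon/2)$. Both approaches are reasonable, and yours has the advantage of not needing to track the exact mollification radius; the paper's avoids having to replace $\gamma$ by a simple arc and avoids boundary issues by working in all of $\set R^2$.

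However, there is a genuine gap. After applying the Jordan curve theorem to a simple arc $\tilde\gamma\subset V_\eta$ joining the bottom to the top of the rectangle, you obtain two components $A_L$ (containing the left edge) and $A_R$ (containing the right edge), and you set $U_L=(\overline\Omega\setminus\overline{V_\eta})\cap A_L$, $U_R=(\overline\Omega\setminus\overline{V_\eta})\cap A_R$. But the inclusions $\omega_{\text{left}}\setminus\omega_0\subset U_L$ and $\omega_{\text{right}}\setminus\omega_0\subset U_R$ are \emph{asserted} as part of the topological claim and never proved. They are not automatic: $\omega_0$ may reach into the strip $(-1,-a)\times(0,\pi)$ and disconnect $\omega_{\text{left}}\setminus\omega_0$, and a priori some component could land on the wrong side. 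This is exactly the step where the definition $a=\max_s\lvert\abscissa(\gamma(s))\rvert$ is used, and the argument must be supplied. Concretely: if $z=(x,y)\in\omega_{\text{left}}\setminus\omega_0$ then $x<-a$ and $\distance(z,\ran(\gamma))\geq\varepsilon$; since every point of $\ran(\gamma)$ has abscissa $\geq -a>x\geq x'$ for any $x'\in[-1,x]$, one has $\distance((x',y),\ran(\gamma))\geq\distance((x,y),\ran(\gamma))\geq\varepsilon>\eta$, so the horizontal segment from $z$ to $(-1,y)$ avoids $\overline{V_\eta}\supset\tilde\gamma$; hence $z$ lies in the component $A_L$ of the left edge. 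The symmetric argument gives $\omega_{\text{right}}\setminus\omega_0\subset A_R$. This is precisely the monotonicity observation the paper makes (``the open segment $\{(x',y),\ x<x'<1\}$ lies outside of $\omega_0$''), and without it the proof is incomplete. There are also a couple of smaller technical points worth cleaning up (the polygonal arc must reach $\partial\Omega$, which is not in the interior where $V_\eta$ is open in $\set R^2$; and $\overline{V_\eta}$ contains boundary points so it is not literally a subset of $\omega_0\subset\Omega$, which slightly affects your disjointness and covering claims), but the missing inclusion argument is the substantive gap.
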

By looking at Fig.~\ref{fig:omega 0 left right}, one should be convinced such a cutoff does exists; nevertheless, we provide a rigorous proof in Appendix~\ref{sec:cutoff_proof}. We define 
\[f:=\theta f_{\text{left}}+(1-\theta)f_{\text{right}}.\]

We remark that $f$ solves
\begin{equation*}
\left\{\begin{array}{ll}
(\partial_t - \partial_x^2 - x^2\partial_y^2)f(t,x,y)  =  u(t,x,y) 
&  t\in [0,T], (x,y)\in \Omega,\\
f(t,x,y) = 0 &  t \in [0,T], (x,y)\in \partial\Omega,\\
f(0,x,y) = f_0(x,y)& (x,y)\in \Omega,
\end{array}\right.
\end{equation*}
where
\[u:=\theta \mathbf 1_{\omega_{\text{left}}}u_{\text{left}}+(1-\theta)\mathbf 1_{\omega_{\text{right}}}u_{\text{right}}
+ (f_{\text{right}}-f_{\text{left}})(\partial_x^2 + x^2\partial_y^2)\theta
+2\partial_x(f_{\text{right}}-f_{\text{left}})\partial_x\theta
+2x^2\partial_y(f_{\text{right}}-f_{\text{left}})\partial_y\theta.\]
The properties of $\theta$ given in Lemma \ref{th:cutoff},
implies that $u$ is supported in $\omega_0\subset\omega$.
Using the fact that $f_{\text{left}}(T)=f_{\text{right}}(T)=0$, then $f(T)=0$.
Moreover, since $f_{\text{left}}$, $f_{\text{right}}\in L^2(0,T;V(\Omega))$ (see~\eqref{prop:well}), we deduce that $u\in L^2((0,T)\times\Omega)$.
\end{proof}

\section{Proof of the negative result}
\subsection{Proof of Theorem~\ref{th:main_negative}}\label{sec:neg}
We prove in this section the first non-null-controllability result 
(Theorem~\ref{th:main_negative}). We do this by adapting the method used by the second author to disprove the null-controllability when $\omega$ is the complement of a horizontal strip~\cite{koenig_2017}\footnote{What we are actually saying is that we recommend reading \cite[Section 2]{koenig_2017} to get a hang of the proof before reading the present Section~\ref{sec:neg}.}.

First, we note that according to the hypothesis that $\{(x,y_0),-a<x<a\}$ does
not intersect $\overline{\omega}$, for every $a'<a$, there exists a rectangle
of the form $\{-a'<x<a', |y-y_0|<\delta\}$ that does not intersect $\omega$
(see Fig.~\ref{fig:omega_neg}). Thus, we assume in the rest of this proof that
$\omega$ is the complement of this rectangle, i.e.

\begin{equation*}
\omega = \Omega\setminus\{-a'<x<a', |y-y_0|<\delta\}.
\end{equation*}
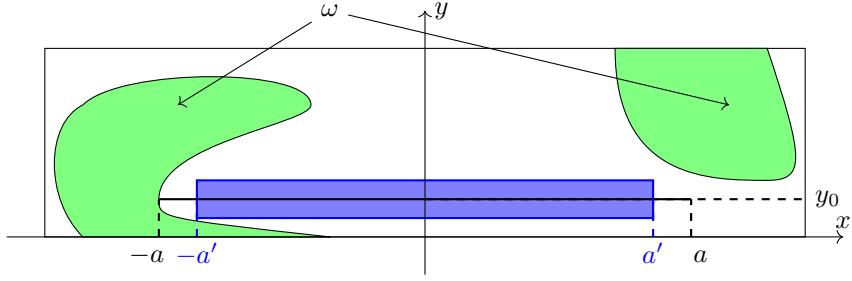
\begin{figure}
\centering
\begin{tikzpicture}[scale = 2.5]
  \draw (-2,0) rectangle (2,1);
  \draw[blue, fill=blue!50, thick] (-1.2,0.1) rectangle (1.2,0.3);
  \draw[fill = green!50] (-0.5,0) -- (-1.8,0) .. controls 
(-2,0.2) and (-2,0.6) .. (-1.8,0.7) .. controls 
(-1.6,0.9) and (-0.6,0.9) ..  (-0.6,0.7) .. controls 
(-0.6,0.6) and (-1.4,0.5) .. (-1.4,0.2) .. controls (-1.4,0.1) .. cycle;
  \draw[fill = green!50]  (1.8,0.3) .. controls 
(2,0.3) and (2,0.4) .. (1.8,1) --  (1,1) .. controls 
(1,0.3) and (1.5,0.3) .. cycle;
  \node (a) at (-0.5,1.2) {$\omega$};
  \draw[->] (a) -- (-1.3,0.7);
  \draw[->] (a) -- (1.6,0.7); 
  \draw[thick] (-1.4,0.2) -- (1.4,0.2);
  \draw[thick, dashed] (0,0.2) -- (2,0.2) node[right]{$y_0$};
  \draw[dashed, thick] (-1.4,0) node[below left = 0 and -0.2]{$-a$} -- (-1.4,0.2);
  \draw[dashed, thick] (1.4,0) node[below right = 0.05 and -0.1]{$a$} -- 
(1.4,0.2);
  \draw[dashed, thick, blue] (-1.2,0) node[below = -0.05]{$-a'$} -- 
(-1.2,0.2);
  \draw[dashed, thick, blue] (1.2,0) node[below = -0.05]{$a'$} -- 
(1.2,0.2);
 \draw[->] (-2.2,0) -- (2.2,0) node[above]{$x$};
 \draw[->] (0,-0.2) -- (0,1.2) node[right]{$y$};
\end{tikzpicture}
\caption{In green, the domain $\omega$. If we have a horizontal segment that does not touch $\omega$ (except maybe the extremities), we can find a
horizontal rectangle about the horizontal segment (in blue), with its length as
close as we want to the length of the horizontal segment. In the rest of the
proof, we will assume that $\omega$ is the complement of this blue rectangle.}
\label{fig:omega_neg}
\end{figure}
To disprove the null-controllability, we disprove the \emph{observability
inequality}, which is equivalent to the null-controllability (see Coron's
book~\cite[Theorem 2.44]{coron_2007} for a proof of this equivalence): there
exists $C>0$ 
such that for all $f_0$ in $L^2(\Omega)$, the solution $f$ to

\begin{equation*}
\left\{
\begin{array}{ll}
(\partial_t - \partial_x^2 - x^2\partial_y^2)f(t,x,y)  = 0 &  t\in [0,T], (x,y)\in \Omega,\\
f(t,x,y) = 0 &  t \in [0,T], (x,y)\in \partial\Omega,\\
f(0,x,y) = f_0(x,y)& (x,y)\in \Omega,
\end{array}
\right.
\end{equation*}
satisfies
\begin{equation}\label{eq:obs}
 \int_\Omega |f(T,x,y)|^2\diff x\diff y  \leq C \int_{[0,T]\times \omega}
|f(t,x,y)|^2\diff t \diff x \diff y.
\end{equation}

For each integer $n > 0$, let us note $v_n$ the first eigenfunction of $-\partial_x^2 + (n x)^2$ with Dirichlet boundary conditions on $(-1,1)$, and with associated eigenvalue $\lambda_n$. Then the function $v_n(x)\sin(ny)$ is an eigenfunction of $-\partial_x^2 - x^2\partial_y^2$ with Dirichlet boundary condition on $\partial\Omega$, and with eigenvalue $\lambda_n$. We will disprove the observability inequality~\eqref{eq:obs} with solutions $f$ of the form

\begin{equation*}
f(t,x,y) = \sum_{n>0} a_n v_n(x) \eu^{-\lambda_n t} \sin(ny).
\end{equation*}

To avoid irrelevant summability issues, we will assume that all the sums are finite. The observability inequality~\eqref{eq:obs} applied to these functions reads:

\begin{equation}\label{eq:obs2}
|f(T,\cdot,\cdot)|_{L^2(\Omega)}^2= \frac\pi2
\sum_{n>0} |a_n|^2|v_n|_{L^2}^2 \eu^{-2\lambda_n T}
\leq C \int_{[0,T]\times \omega} \left\lvert \sum_{n>0} a_n v_n(x) \eu^{-\lambda_n t}\sin(ny) \right\rvert^2\diff t\diff x\diff y =
C|f|_{L^2([0,T]\times\omega)}^2.
\end{equation}
Note that we have not supposed the $v_n$ to be normalized in $L^2$. Instead, we will 
find more convenient to normalize them by the condition $v_n(0) = 1$. Let us remind that thanks to Sturm-Liouville theory, $v_n$ is even and $v_n(0)\neq 0$ (see \cite[Lemma 2]{beauchard_2014}).
\begin{figure}
\centering
\begin{tikzpicture}[scale=2.1]
\draw[fill=yellow!50]  (27:0.4) arc(27:45:0.4) -- (45:1.1) arc(45:315:1.1) -- 
(315:0.4) arc(315:333:0.4) -- (333:1.1) arc(333:387:1.1) -- cycle;
\node at(-0.4,0.4){$U$};
\draw[thick,->](-1.2,0) -- (1.2,0);
\draw[thick,->] (0,-1.2) -- (0,1.2);
\draw[thick] (0,0) -- (36:1.3);
\draw[thick, ->] (0:0.6) arc(0:36:0.6) node[pos=0.5, right]{$y_0$};
\draw[thin, gray] (0,0) circle[radius = 0.4];
\draw[<->] (0,0) -- (-110:0.4) node[pos = 0.5, left, 
fill=yellow!50, text=black, fill opacity = 0.5, inner sep 
= 0pt, text opacity = 1]{$e^{-(1-2\epsilon)a'^2\!/2}$};
\end{tikzpicture}
\caption{The domain $U$.}
\label{fig:U}
\end{figure}
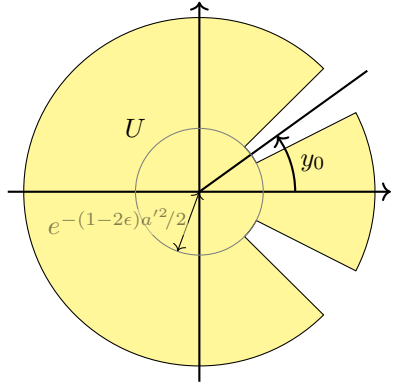%

The main idea is to relate this inequality to an estimate on entire polynomials. Let 
 $\varepsilon\in(0,1/2)$ be a small real number to be chosen later (it will depend only on 
$T$ and $a'$). Then we have Lemma~\ref{th:lemma_hol}:
\begin{lemma}\label{th:lemma_hol}
Let\footnote{We note $D(a,r) = \{z\in \set C, |z-a|<r\}$ the complex open disk of center $a$ and radius $r$.} $U = \{|z|<1, ||\arg(z)|-y_0| > \delta/2\}\cup 
D(0,\eu^{-(1-2\varepsilon)a'^2\!/2})$ (see Fig.~\ref{fig:U}).
The inequality~\eqref{eq:obs2} implies that there exists $N>0$ and $C>0$ such that 
for all entire polynomials of the form $p(z) = \sum_{n>N} a_n z^n$,

\begin{equation}\label{eq:obs_lemma}
|p|_{L^2(D(0,e^{-T}))} \leq C |p|_{L^\infty(U)}.
\end{equation}
\end{lemma}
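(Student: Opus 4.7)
The plan is to apply the observability inequality~\eqref{eq:obs2} to the specific solution $f(t,x,y) := \sum_{n>N} a_n v_n(x)e^{-\lambda_n t}\sin(ny)$ of the Grushin equation, and then to translate both sides into quantities involving the polynomial $p(z) = \sum_{n>N}a_n z^n$. Two standard spectral asymptotics for the first eigenpair of $-\partial_x^2 + n^2 x^2$ on $(-1,1)$ with Dirichlet boundary conditions do most of the work: on the one hand, $\lambda_n = n + O(e^{-cn})$ and $|v_n|_{L^2}^2\asymp n^{-1/2}$; on the other hand, the Agmon-type exponential decay $v_n(x)\leq C\,e^{-(1-\varepsilon)nx^2/2}$, compatible with the normalization $v_n(0) = 1$ and the harmonic-oscillator limit $v_n(x)\sim e^{-nx^2/2}$ uniformly on compact subsets of $(-1,1)$. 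The threshold $N$ is chosen, as a function of $T$, $a'$ and $\varepsilon$, large enough that the correction factors $e^{-(\lambda_n-n)t}$ stay within $1\pm\varepsilon$ on $[0,T]$ and the WKB error in $v_n$ is negligible.

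For the left-hand side of~\eqref{eq:obs2}, Parseval in polar coordinates yields $|p|_{L^2(D(0,e^{-T}))}^2 = \pi\sum_{n>N}|a_n|^2 e^{-2(n+1)T}/(n+1)$. A term-by-term comparison with $\tfrac{\pi}{2}\sum_{n>N}|a_n|^2|v_n|_{L^2}^2 e^{-2\lambda_n T}$ shows that the ratio of the general terms is $\asymp\sqrt{n}\,e^{2T(n+1-\lambda_n)}\to\infty$ since $n+1-\lambda_n\to 1$; for $N$ large enough, this gives
\[
|p|_{L^2(D(0,e^{-T}))}^2\leq C\sum_{n>N}|a_n|^2|v_n|_{L^2}^2 e^{-2\lambda_n T}.
\]

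For the right-hand side, we split $\omega = \omega_1\cup\omega_2$ with $\omega_1 = \omega\cap\{|y-y_0|>\delta\}$ and $\omega_2 = \omega\cap\{|y-y_0|\leq\delta\}$, on which the definition of $\omega$ forces $|x|\geq a'$. On $\omega_2$, Agmon decay gives $v_n(x)\leq C\,e^{-(1-\varepsilon)na'^2/2}$, and Cauchy estimates for $p$ on $\partial D(0,e^{-(1-2\varepsilon)a'^2/2})\subset U$ yield $|a_n|\leq |p|_{L^\infty(U)}\,e^{(1-2\varepsilon)na'^2/2}$; substituting, the two exponentials combine into a convergent geometric series and produce the pointwise bound $|f|\leq C|p|_{L^\infty(U)}$, hence $\int_{[0,T]\times\omega_2}|f|^2\leq C|p|_{L^\infty(U)}^2$. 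On $\omega_1$, the change of variable $z = e^{-t+iy}$ maps $[0,T]\times\{|y-y_0|>\delta\}$ into the annular sector $\Sigma = \{e^{-T}\leq|z|\leq 1,\,|\arg z - y_0|>\delta\}\subset U$; writing $f = \mathrm{Im}\,G$ with $G = \sum_{n>N}a_n v_n(x)e^{-(\lambda_n - n)t}z^n$ and invoking the harmonic-oscillator limit $v_n(x)\sim e^{-nx^2/2}$, the main part of $G$ identifies with $p(ze^{-x^2/2})$, and $|p(ze^{-x^2/2})|\leq|p|_{L^\infty(U)}$ because $|ze^{-x^2/2}|\leq|z|\leq 1$ and $\arg(ze^{-x^2/2}) = \arg z$ place the point inside $\Sigma\subset U$.

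The main obstacle is the rigorous treatment of the bound on $\omega_1$: the heuristic identification $G\approx p(ze^{-x^2/2})$ must be quantified uniformly enough in $(t,x,y)$ that the remainder does not destroy the sharp $\delta/2$-sector structure of $U$ (replacing it by a crude disk $L^\infty$-bound would lose the critical scaling $T=a^2/2$). A cleaner alternative, which bypasses the WKB expansion, is to represent $G$ via the Cauchy formula $G = \tfrac{1}{2\pi i}\oint_\Gamma p(\zeta)K(\zeta;t,x,z)\,d\zeta$ along a closed contour $\Gamma\subset U$ encircling both $0$ and $z$: such a $\Gamma$ can be chosen so that its angular portions lie in the sector $||\arg\zeta|-y_0|>\delta/2$ (where $|\zeta|$ can be taken close to $1$) while its radial portions cross the forbidden sectors inside $D(0,e^{-(1-2\varepsilon)a'^2/2})$, and the kernel $K(\zeta;t,x,z) = \zeta^{-1}\sum_{n>N} v_n(x)e^{-(\lambda_n-n)t}(z/\zeta)^n$ is then controlled uniformly by the Agmon decay of $v_n$ on $\Gamma$.
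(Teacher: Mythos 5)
Your high-level plan is the paper's: apply~\eqref{eq:obs2} to $f=\sum_{n>N}a_nv_n(x)\eu^{-\lambda_nt}\sin(ny)$, translate the left-hand side into $|p|_{L^2(D(0,\eu^{-T}))}^2$ via Parseval in polar coordinates and the spectral asymptotics $\lambda_n=n+O(1)$, $|v_n|_{L^2}^2\gtrsim n^{-1/2}$, and bound the right-hand side by $|p|_{L^\infty(U)}^2$. The left-hand side step matches the paper and is correct. On the right-hand side, your split into $\omega_2=\omega\cap\{|y-y_0|\le\delta\}$ (so $|x|\ge a'$), handled by Agmon decay of $v_n$ plus Cauchy estimates on a circle inside $U$, and $\omega_1=\omega\cap\{|y-y_0|>\delta\}$, handled by a change of variable, is a legitimate reorganization. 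The $\omega_2$ argument is sound.

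The genuine gap is on $\omega_1$, and your proposed Cauchy-formula fix does not close it. With $z=\eu^{-t+\iu y}$, the kernel $K(\zeta;t,x,z)=\zeta^{-1}\sum_{n>N}v_n(x)\eu^{-\rho_nt}(z/\zeta)^n$ converges only where $|\zeta|>|z|\limsup_n|v_n(x)|^{1/n}\approx \eu^{-t-(1-\varepsilon)x^2\!/2}$. On $\omega_1$ the coordinate $x$ ranges over all of $(-1,1)$, and for $x$ and $t$ both near $0$ this lower bound is close to $1$; a contour that ``dips'' into $D(0,\eu^{-(1-2\varepsilon)a'^2\!/2})$ to cross the forbidden sectors therefore lies in the region where the series diverges, so the interchange of sum and integral is not justified and the representation fails exactly in the regime you need it most. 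The paper's route avoids this: the factor $\eu^{-(1-\varepsilon)nx^2\!/2}$ is absorbed into the change of variable $z_x=\eu^{-t+\iu y-(1-\varepsilon)x^2\!/2}$, so the multiplier becomes $w_n(x)\eu^{-\rho_n t}$, which is a \emph{uniformly sub-exponential symbol} in the sense of Definition~\ref{def:symbol} (Theorem~\ref{th:symbols}); Theorem~\ref{th:default} of~\cite{koenig_2017} (used via Lemma~\ref{th:est_hol}) then gives the $L^\infty(K)\le C\,L^\infty(U)$ bound by exploiting that $U$ is star-shaped with respect to $0$ — geometrically, $|z_x|$ is close to $1$ only when $|x|<a'$, in which case $z_x$ lies in the good sector and the contour can be deformed accordingly. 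To make your argument rigorous you would essentially have to reprove that theorem, so your proof as written leaves the key quantitative step open. A secondary issue: writing $f=\mathrm{Im}\,G$ is only valid for real $(a_n)$; the lemma concerns arbitrary complex polynomials, so you should instead use $\sin(ny)=(\eu^{\iu ny}-\eu^{-\iu ny})/2\iu$ and symmetrize $\omega$ in $y$ as the paper does.
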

\begin{proof}
About the left-hand side of the observability inequality~\eqref{eq:obs2}, we first note that by writing the integral on a disk $D=D(0,r)$ of $z^n\overline{z}^m$ in polar coordinates, we find that the functions $z\mapsto z^n$ are orthogonal on $D(0,r)$. So, we have for all polynomials\footnote{We denote $\lambda$ the Lebesgue measure on $\set C\simeq \set R^2$. I.e. for a function $f\colon \set C\to\set C$, if $(x,y)\in \set R^2\mapsto f(x+\iu y)$ is integrable, then $\int_{\set C} f(z)\diff\lambda(z) = \int_{\set R^2} f(x+iy)\diff x\diff y$.} $\sum_{n> 0} a_n z^{n-1}$:

\[\int_{D(0,\eu^{-T})} \Big\lvert\sum_{n>0} a_n z^{n-1}\Big\rvert^2 \diff \lambda(z) = \sum_{n>0} |a_n|^2 \int_{D(0,\eu^{-T})} |z|^{2n-2} \diff \lambda(z)\]
and computing $\int_{D(0,\eu^{-T})}|z|^{2n-2}\diff \lambda(z) $ in polar coordinates:

\begin{equation*}
\int_{D(0,\eu^{-T})} \Big\lvert\sum_{n>0} a_n z^{n-1}\Big\rvert^2 \diff \lambda(z)= \sum_{n>0}\frac\pi n |a_n|^2 \eu^{-2nT}.
\end{equation*}
Moreover, we know from basic spectral analysis that writing $\lambda_n = n + \rho_n$, $(\rho_n)_{n\geq 0}$ is bounded (see~\cite[Section 3.3]{beauchard_2014}) and that $|v_n|_{L^2(-1,1)}^2\geq c|n|^{-1/2}$ (see for instance~\cite[Lemma 21]{koenig_2017}), so that

\begin{align*}
\int_{D(0,\eu^{-T})} \Big\lvert\sum_{n>0} a_n z^{n-1}\Big\rvert^2 \diff \lambda(z)&\leq \pi c^{-1}\sum_{n>0}|a_n|^2|v_n|^2_{L^2} \eu^{-2nT}\\
&\leq \pi c^{-1}\eu^{2\sup_k\rho_k T} \sum_{n>0} |a_n|^2 |v_n|^2_{L^2}\eu^{-2\lambda_n T}\\
&=C|f(T,\cdot,\cdot)|^2_{L^2(\Omega)}.
\end{align*}
Thus, the observability inequality~\eqref{eq:obs2} implies for another constant $C$ (that depends on time $T$ but it does not matter):

\begin{equation}\label{eq:obs3.5}
\int_{D(0,\eu^{-T})} \Big\lvert\sum_{n>0} a_n z^{n-1}\Big\rvert^2 \diff \lambda(z) \leq C|f|_{L^2([0,T]\times \omega)}^2.
\end{equation}

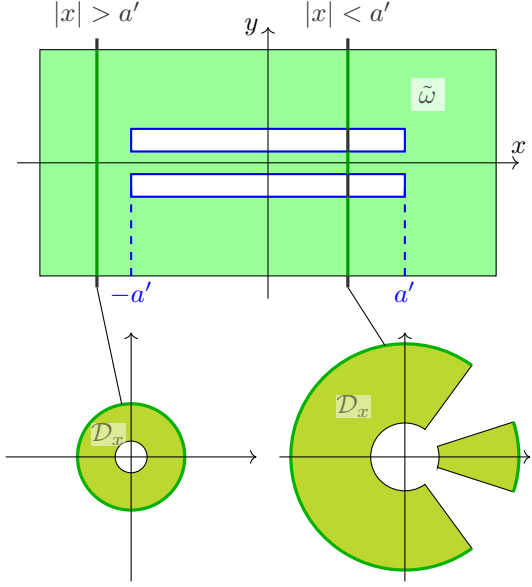
\begin{figure}
 \begin{minipage}[c]{0.6\textwidth}
\centering
\begin{tikzpicture}[scale = 1.5]
  \draw[fill = green!40] (-2,-1) rectangle (2,1);
  \draw[blue, thick,fill=white] (-1.2,0.1) rectangle (1.2,0.3);
  \draw[blue, thick,fill=white] (-1.2,-0.1) rectangle (1.2,-0.3);
 \node[fill=white, fill opacity = 0.7, text opacity = 1, inner sep = 3pt] 
at(1.4,0.6) {$\tilde \omega$};
 \draw[->] (-2.2,0) -- (2.2,0) node[above]{$x$};
 \draw[->] (0,-1.2) -- (0,1.2) node[left]{$y$};
 
 \draw[green!60!black, very thick] (-1.5,1) -- (-1.5,-1);
 \draw[very thick, black!80]
  (-1.5,1.1) node[above]{$|x|>a'$} -- (-1.5,1) 
  (-1.5,-1) -- (-1.5,-1.1) node(complet){};
 \draw[green!60!black, very thick] (0.7,1) -- (0.7,0.3) 
  (0.7,0.1) --(0.7,-0.1)
  (0.7,-0.3) -- (0.7,-1);
 \draw[very thick, black!80] 
  (0.7,1.1) node[above]{$|x|<a'$} -- (0.7,1)
  (0.7,0.3) -- (0.7,0.1)
  (0.7,-0.1) -- (0.7,-0.3)
  (0.7,-1) -- (0.7,-1.1) node(partiel){};
  \draw[dashed, thick, blue] (-1.2,-1) node[below = -0.05]{$-a'$} -- 
(-1.2,-0.2);
  \draw[dashed, thick, blue] (1.2,-1) node[below = -0.05]{$a'$} -- 
(1.2,-0.2);
 
 \begin{scope}[shift = {(-1.2,-2.6)}]
  \draw[very thick, green!70!black, fill = yellow!70!green] (0,0) circle[radius = 0.47];
  \draw[fill = white] (0,0) circle[radius = 0.14];
  \draw (complet.center) -- (100:0.47);
  \node at (-0.2,0.2) [fill = white, fill opacity = 0.5, inner sep = 0.1pt, text opacity = 1] {$\mathcal D_x$};
 \draw[->] (-1.1,0) -- (1.1,0);
 \draw[->] (0,-1.1) -- (0,1.1);
 \end{scope}
 
 \begin{scope}[shift={(1.2,-2.6)}]
  \draw[fill=yellow!70!green]  (54:0.3) -- (54:1) arc(54:306:1) -- 
(306:0.3) arc(306:54:0.3) -- cycle;
  \draw[fill=yellow!70!green]  (-18:0.3) -- (-18:1) arc(-18:18:1) -- 
(18:0.3) arc(18:-18:0.3) -- cycle;
  \draw[very thick, green!70!black]  (54:1) arc(54:306:1)  (342:1)arc(342:378:1);
  \draw (partiel.center) -- (100:1);
  \node at (-0.45,0.45) [fill = white, fill opacity = 0.5, inner sep = 0.1pt, text 
opacity = 1] {$\mathcal D_x$};
 \draw[->] (-1.1,0) -- (1.1,0);
 \draw[->] (0,-1.1) -- (0,1.1);
 \end{scope}

\end{tikzpicture}
 \end{minipage}\hfill%
 \begin{minipage}[c]{0.4\textwidth}
\caption{Above: The domain $\tilde \omega$ is the union of $\omega$ and of its symmetric with respect to the axis $x=0$. Below: the domain $\mathcal D_x$ is defined to be the set of complex numbers $z$ of modulus between $\eu^{-T-(1-\varepsilon)x^2\!/2}$ and $\eu^{-(1-\varepsilon)x^2\!/2}$, and with argument such that $(x,\arg(z))\in\tilde \omega$. It is a partial ring if $|x|<a'$ and a whole ring if $|x|>a'$. Indeed, if we take a slice of $\tilde \omega$ by fixing $x$, when $|x|<a'$, we don't have the whole interval $(-\pi,\pi)$, but when $|x|>a'$, the slice is the whole interval $(-\pi,\pi)$.}
\label{fig:omega_symetric}
\end{minipage}
\end{figure}%

To bound the right-hand side, we begin by writing $\sin(ny) = (\eu^{\iu ny}-e^{-\iu ny})/2\iu$, so that the right-hand side satisfies

\begin{equation*}
|f|_{L^2([0,T]\times\omega)}^2  \leq \frac12 \int_{[0,T]\times \omega} 
\left(\Big\lvert\sum_n a_n v_n(x)\eu^{\iu ny-\lambda_n t}\Big\rvert^2 + \Big\lvert\sum_n a_n 
v_n(x)\eu^{-\iu ny-\lambda_n t}\Big\rvert^2\right)\diff t\diff x\diff y.
\end{equation*}
Then, noting $\tilde \omega = \omega\cup \{(x,-y),(x,y)\in\omega\}$ (see Fig.~\ref{fig:omega_symetric}), we rewrite this 
as 

\begin{equation*}
|f|_{L^2([0,T]\times\omega)}^2 \leq \frac12 \int_{[0,T]\times \tilde\omega} 
\Big\lvert\sum_n a_n v_n(x)\eu^{\iu ny-\lambda_n t}\Big\rvert^2 \diff t\diff x\diff y.
\end{equation*}

Then, we again write $\lambda_n = n + \rho_n$ and $v_n(x) = \eu^{-(1-\varepsilon)nx^2\!/2}w_n(x)$, so that with $z_x(t,y) = \eu^{-t+\iu y-(1-\varepsilon)x^2\!/2}$, the previous inequality implies:

\begin{equation}\label{eq:obs3.6}
|f|_{L^2([0,T]\times \omega)}^2 \leq 
\frac12\int_{[0,T]\times \tilde \omega} \Big\lvert\sum_{n>0} a_n w_n(x)\eu^{-\rho_n t} 
z_x(t,y)^n\Big\rvert^2\diff t\diff x\diff y.
\end{equation}

Now for each $x\in(-1,1)$, we make the change of variables $z_x = 
\eu^{-t+\iu y-(1-\varepsilon)x^2\!/2}$, for which $\diff t\diff y = |z_x|^{-2}\diff\lambda(z)$. For each 
$x$, let us note $\mathcal D_x$ the image of the set we integrate on for this change of variables (see Fig.~\ref{fig:omega_symetric}), that is,
\begin{equation*}
\mathcal D_x = \{\eu^{-(1-\varepsilon)x^2\!/2}\eu^{-t+\iu y}, 0<t<T, (x,y)\in\tilde\omega\}.
\end{equation*}

We get
\begin{equation*}
\int_{[0,T]\times \tilde \omega} \Big\lvert\sum_{n>0} a_n w_n(x)\eu^{-\rho_n t} z_x(t,y)^n\Big\rvert^2\diff t\diff x\diff y = \int_{-1}^1\int_{\mathcal D_x}\Big\lvert\sum_{n>0} a_n w_n(x)e^{-\rho_n t} z^{n-1}\Big\rvert^2\diff \lambda(z)\diff x,
\end{equation*}
where we kept the notation $e^{-\rho_n t}$ for simplicity instead of expressing it as a function of $z$ and $x$ (we have $e^{-\rho_n t} = |e^{(1-\varepsilon)x^2\!/2}z|^{\rho_n}$). With this change of variables, the inequality~\eqref{eq:obs3.6} becomes

\begin{equation}\label{eq:obs4}
|f|_{L^2([0,T]\times \omega)}^2 \leq 
\frac12\int_{-1}^1\int_{\mathcal D_x}\Big\lvert\sum_{n>0} a_n w_n(x)\eu^{-\rho_n t}
z^{n-1}\Big\rvert^2\diff \lambda(z)\diff x. 
\end{equation}

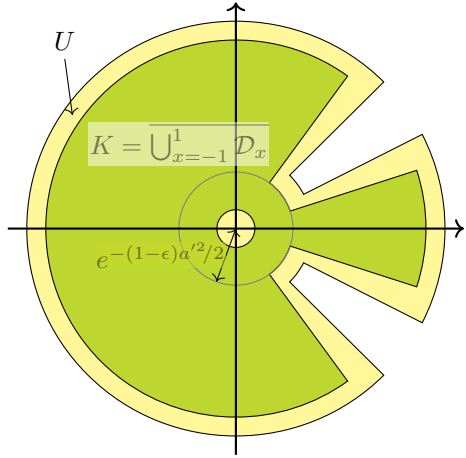
\begin{figure}
 \begin{minipage}{0.6\textwidth}
\centering
\begin{tikzpicture}[scale=2.5]
\draw[fill=yellow!50]  (27:0.4) arc(27:45:0.4) -- (45:1.1) arc(45:315:1.1) -- 
(315:0.4) arc(315:333:0.4) -- (333:1.1) arc(333:387:1.1) -- cycle;
\draw[fill=yellow!70!green]  (18:0.3) arc(18:54:0.3) -- (54:1) arc(54:306:1) -- 
(306:0.3) arc(306:342:0.3) -- (342:1) arc(342:378:1) -- cycle;
\draw[fill=yellow!50] (0,0) circle[radius = 0.1];
\draw[->] (-0.9,0.9) node[above]{$U$} -- (145:1.05);
\draw[gray, thin] (0,0) circle[radius = 0.3];
\draw[<->] (0,0) -- (-110:0.3) node[pos = 0.5, left, 
fill=yellow!75!green, text=black, fill opacity = 0.5, inner sep 
= 0pt, text opacity = 1]{$e^{-(1-\epsilon)a'^2\!/2}$};
\draw[thick,->](-1.2,0) -- (1.2,0);
\draw[thick,->] (0,-1.2) -- (0,1.2);
\node at(-0.3,0.45)[fill=white, fill opacity = 0.5, text opacity = 1, inner sep 
= 0.5pt]{$K = \overline{\bigcup_{x=-1}^1 \mathcal D_x}$};
\end{tikzpicture}
 \end{minipage}\hfill%
 \begin{minipage}[c]{0.4\textwidth}
\caption{The domain $K$, in green, is (the closure of) the union of 
the domains $\mathcal D_x$ described in Fig.~\ref{fig:omega_symetric}. For the domain 
$K$ the radius of the inner part-of-circle is the largest radius of the $\mathcal 
D_x$ that is a full ring, i.e. $\eu^{-(1-\varepsilon)a'^2\!/2}$. We also show $U$ for 
comparison. Notice that $U$ has been defined to be a neighborhood of $K$ that is 
star-shaped with respect to $0$.}
\label{fig:D}
\end{minipage}
\end{figure}

We want to bound the right-hand side by $\lvert \sum_{n>0} a_n 
z^{n-1}\rvert_{L^\infty(U)}^2$. To do this, we use the following Lemma~\ref{th:est_hol}, that we prove in Section~\ref{sec:lemma_hol_proof}. This Lemma is a rigorous statement of the fact that $\rho_n$ and $w_n$ are small.\puncfootnote{At least small enough for our purposes, the good notion is that of Symbols, see Definition~\ref{def:symbol} below.}
\begin{lemma}\label{th:est_hol}
 Let \(K\noic\) be a compact subset of $\set C$ and let \(V\noic\) be a bounded neighborhood of \(K\noic\) that 
is star-shaped with respect to $0$. Then, there exists $C>0$ and $N>0$ such that for 
every $x\in(-1,1)$, for every $\tau \in [0,T]$, and for every polynomial 
$\sum_{n>N} a_n z^{n-1}$:

\begin{equation*}\label{eq:est_hol}
 \Big\lvert \sum_{\mathclap{n>N}} a_n w_n(x) \eu^{-\rho_n\tau}z^{n-1}\Big\rvert_{{L^\infty(K)}} 
\leq C\Big\lvert \sum_{\mathclap{n>N}} a_n z^{n-1}\Big\lvert_{L^\infty(V)}.
\end{equation*}
\end{lemma}

From now on, we assume that $a_n = 0$ for $n\leq N$. In Lemma~\ref{th:est_hol}, we choose $K = \overline{\bigcup_{x=-1}^1 \mathcal D_x}$ and $V = U$, where $U$ was defined in the statement of Lemma~\ref{th:lemma_hol} (see 
Fig.~\ref{fig:D}). Notice that by definition of $\mathcal D_x$, $K$ is the union of the ring $\{e^{-T-(1-\varepsilon)/2} \leq |z| \leq e^{-(1-\varepsilon)a'^2\!/2}\}$ and of the partial ring $\{e^{-T-(1-\varepsilon)a'^2\!/2}\leq|z|\leq1, ||\arg(z)|-y_0|\geq\delta\}$, and so $U$ is a neighborhood of $K$ that is star-shaped with respect to $0$, and the hypotheses of Lemma~\ref{th:est_hol} are satisfied. We get, by taking $\tau = t$ in Lemma~\ref{th:est_hol} (let us remind that $t$ is a function of $z$ and $x$):

\begin{equation*}
 \Big\lvert \sum_{\mathclap{n>N}} a_n w_n(x) 
\eu^{-\rho_n t}z^{n-1}\Big\rvert_{L^\infty(K)} 
\leq C\Big\lvert \sum_{\mathclap{n>N}} a_n z^{n-1}\Big\lvert_{L^\infty(U)}.
\end{equation*}

Since $K$ contains every $\mathcal D_x$, we have for every $x\in(-1,1)$:
\begin{equation}\label{eq:obs_n}
\int_{\mathcal D_x}\Big\lvert\sum_{n>N} a_n w_n(x)\eu^{-\rho_n t}
z^{n-1}\Big\rvert^2\diff \lambda(z)
\leq \lambda(K)\Big\lvert \sum_{\mathclap{n>N}} a_n w_n(x) 
\eu^{-\rho_n t}z^{n-1}\Big\rvert_{L^\infty(K)}^2 \leq \pi C^2\Big\lvert \sum_{\mathclap{n>N}} a_n z^{n-1}\Big\lvert_{L^\infty(U)}^2.
\end{equation}

Then, we plug this estimate~\eqref{eq:obs_n} into the estimate~\eqref{eq:obs4} to get for some constant $C>0$:
\begin{equation}
|f|_{L^2([0,T]\times \omega)}^2
\leq \pi C^2\Big\lvert\sum_{\mathclap{n>N}} a_n z^{n-1}\Big\rvert^2_{L^\infty(U)}.
\end{equation}

Combining this with the estimate~\eqref{eq:obs3.5} on the left-hand side, we get for some constant $C>0$:

\begin{equation*}
\int_{D(0,\eu^{-T})} \Big\lvert\sum_{\mathclap{n>N}} a_n z^{n-1}\Big\rvert^2 \diff \lambda(z) \leq C \Big\lvert\sum_{\mathclap{n>N}} a_n z^{n-1}\Big\rvert^2_{L^\infty(U)},
\end{equation*}
which is, up to a change of summation index $n' = n-1$, the estimate~\eqref{eq:obs_lemma} we wanted.
\end{proof}

\begin{figure}
 \begin{minipage}[c]{0.5\textwidth}
\centering
\begin{tikzpicture}[scale=2]
\draw[fill=yellow!50]  (27:0.4) arc(27:45:0.4) -- (45:1.1) arc(45:315:1.1) -- 
(315:0.4) arc(315:333:0.4) -- (333:1.1) arc(333:387:1.1) -- cycle;
\node at(-0.6,0.6)[fill=white, fill opacity = 0.5, text opacity = 1, inner sep 
= 2pt]{$U$};
\draw[fill=red, fill opacity = 0.6] (0,0) circle[radius=0.6];
\draw[red!80!black, thick, ->] (-1,-0.9) node[below]{$D(0,e^{-T})$} -- (-0.2,-0.2);
\draw[blue, thick, arrows={Rays[n=4]-}] (36:0.5) node[below,xshift = 3,fill=white, 
fill opacity = 0.4, text opacity = 1, inner sep = 1pt]{$z_0$} -- 
(36:1.4);
\draw[thick,->](-1.2,0) -- (1.2,0);
\draw[thick,->] (0,-1.2) -- (0,1.2);
\end{tikzpicture}
 \end{minipage}\hfill%
 \begin{minipage}[c]{0.5\textwidth}
\caption{When the disk $D(0,\eu^{-T})$ (in red) is not included in $U$, we can find holomorphic functions that are small in $U$ but arbitrarily big in $D(0,\eu^{-T})$. For instance, we can construct with Runge's theorem a sequence of polynomials that converges to $z\mapsto(z-z_0)^{-1}$ away from the blue line.}
\label{fig:runge}
\end{minipage}
\end{figure}
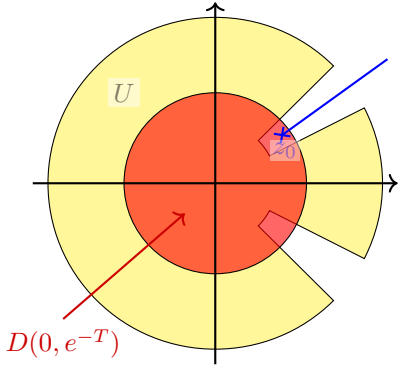

Let us assume $T<(1-2\varepsilon)a'^2\!/2$. We want to disprove the inequality~\eqref{eq:obs_lemma} of Lemma~\ref{th:lemma_hol}. We will use Runge's theorem (see for instance Rudin's texbook \cite[Theorem 13.9]{rudin_1986}) to construct a counterexample.
\begin{prop}[Runge's theorem]
  Let \(U\noic\) be a connected and simply connected open subset of $\set C$, and let $f$ be a holomorphic function on \(U\noic\). There exists a sequence $(p_k)$ of polynomials that converges uniformly on every compact subsets of \(U\noic\) to $f$.
\end{prop}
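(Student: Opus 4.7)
The plan is to prove Runge's theorem in the standard two-step fashion: first obtain uniform approximation on any compact $K\subset U$ by rational functions whose poles lie outside $K$ (via a Cauchy integral plus Riemann sum argument), then use the simple connectivity of $U$ to convert these rational approximants into polynomials by pushing the poles to infinity.

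For the first step, I would fix a compact $K\subset U$ and construct a cycle $\Gamma$ in $U\setminus K$ (a finite union of piecewise smooth curves) whose winding number equals $1$ around every point of $K$ and $0$ around every point of $\set C\setminus U$. The classical way to build $\Gamma$ is to cover $K$ by a fine enough grid of closed squares contained in $U$ and take the boundary edges shared by at most one square, oriented accordingly. Cauchy's formula then gives
\[
f(z)=\frac{1}{2\pi \iu}\int_\Gamma \frac{f(\zeta)}{\zeta-z}\diff\zeta \qquad \text{for all } z\in K,
\]
and approximating this line integral by Riemann sums yields rational functions $R_n(z)=\sum_k c_{n,k}/(\zeta_{n,k}-z)$ converging uniformly to $f$ on $K$, with all poles $\zeta_{n,k}$ on $\Gamma\subset U\setminus K$.

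For the second step, I would first enlarge $K$ by adding every bounded component of $\set C\setminus K$. Simple connectivity of $U$ forces $\hat{\set C}\setminus U$ to be connected in the Riemann sphere; since it contains $\infty$ and avoids $K$, it lies entirely in the unbounded component of $\hat{\set C}\setminus K$. Consequently every bounded component of $\set C\setminus K$ sits inside $U$, so the enlargement $\hat K$ is still a compact subset of $U$, and its complement in $\hat{\set C}$ is connected. It thus suffices to approximate $f$ uniformly on $\hat K$ by polynomials. Applying the first step to $\hat K$ and a suitable cycle $\Gamma'$, I reduce the problem to showing that for every pole $a\in\set C\setminus \hat K$, the function $z\mapsto 1/(z-a)$ is a uniform limit on $\hat K$ of polynomials.

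The main obstacle is this last pole-pushing step. I would handle it by choosing a path in the (connected, unbounded) set $\set C\setminus \hat K$ joining $a$ to some point $b$ with $|b|>\max_{z\in\hat K}|z|$, covering it by finitely many disks whose radius is less than the distance of their centers to $\hat K$, and using the identity
\[
\frac{1}{z-a'}=\frac{1}{z-b'}\sum_{m\geq 0}\left(\frac{a'-b'}{z-b'}\right)^m
\]
to transfer a pole at $a'$ to a pole at $b'$ with arbitrarily small uniform error on $\hat K$, whenever $|a'-b'|$ is less than $\mathrm{dist}(b',\hat K)$. Iterating along the chain moves the pole out to $b$, where $1/(z-b)$ is the uniform limit on $\hat K$ of its Taylor polynomials at $0$. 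The delicate point is to track the accumulated errors along the chain so they still sum to something arbitrarily small, which is done by a standard geometric estimate on the Taylor remainders using the radii of the disks.
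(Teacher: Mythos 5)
The paper does not give a proof of this proposition: it is the classical Runge approximation theorem, and the authors simply cite Rudin's textbook (Theorem~13.9) for it. Your sketch is correct and is essentially Rudin's own argument --- Cauchy-integral plus Riemann-sum approximation by rational functions with poles on a cycle in $U\setminus K$, then filling in the bounded holes of $K$ (using that simple connectivity of $U$ forces $\hat{\set C}\setminus U$ to be connected, hence contained in the unbounded component of $\hat{\set C}\setminus K$) and pushing poles to infinity along a chain of disks via the geometric-series identity.
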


By definition of $U$, there exists a complex number $z_0\in D(0,e^{-T})$ which is non-adherent to $U$ (see Fig.~\ref{fig:runge}). Then, according to Runge's theorem, there exists a sequence of polynomials $\tilde p_k$ that converges uniformly on every compact subset of $\set C\setminus (z_0[1,+\infty))$ to $z\mapsto (z-z_0)^{-1}$, and we define $p_k(z) = z^{N+1}\tilde p_k(z)$. Then, the family $p_k$ is a counter example to the inequality on entire polynomials~\eqref{eq:obs_lemma}. Indeed, since $z^{N+1}(z-z_0)^{-1}$ is bounded on $U$, $p_k$ is uniformly bounded on $U$, thus, the right-hand side of the inequality~\eqref{eq:obs_lemma} is bounded. But since $z_0$ is in $D(0,e^{-T})$, $z^{N+1}(z-z_0)^{-1}$ has infinite $L^2$-norm in $D(0,e^{-T})$, and thanks to Fatou's Lemma, $|p_k|_{L^2(D(0,e^{-T}))}$ tends to $+\infty$ as $k\to+\infty$.

Thus, the Grushin equation is not null-controllable if $T<(1-2\varepsilon)a'^2\!/2$. But $a'$ can be chosen arbitrarily close to $a$, and $\varepsilon$ arbitrarily small, so the Grushin equation is not null-controllable if $T<a^2\!/2$.\hfill \qed

\subsection{Proof of Lemma \ref{th:est_hol}}\label{sec:lemma_hol_proof}
For the proof of this Lemma, we will need a few definition and theorems.
\begin{definition}[Definition 9 of~\cite{koenig_2017}]\label{def:symbol}
 Let $r$ be a non-decreasing function $r\colon(0,\pi/2)\to \set R$. For $0<\theta<\pi/2$, we note $\Delta_\theta=\{z\in \set C, |z|>r(\theta), \lvert\arg(z)\rvert<\theta\}$. We define $\mathcal S_r$ 
the set of functions $\gamma$ from $\bigcup_{0<\theta<\pi/2}\Delta_\theta$ to $\set C$, that are holomorphic and with subexponential growth on each $\Delta_\theta$, i.e. such that for all $0<\theta<\pi/2$ and $\varepsilon>0$,
\begin{equation}
 p_{\theta,\varepsilon}(\gamma) \coloneqq \sup_{z\in \Delta_\theta} |\gamma(z)|e^{-\varepsilon |z|} 
< +\infty.
\end{equation}
We endow $\mathcal S_r$ with the topology of the seminorms $(p_{\theta,\varepsilon})_{0<\theta<\pi/2, \varepsilon>0}$. We will call elements of $\mathcal S_r$ \emph{symbols}.
\end{definition}

\begin{theorem}[Theorem 18 of\footnote{In the reference \cite{koenig_2017}, the Theorem is stated sightly differently, but the two formulations are equivalent.} \cite{koenig_2017}]\label{th:default}
Let $r\colon(0,\pi/2)\to \set R$ be a non-decreasing function and a symbol $\gamma\in \mathcal S_r$ and $N = \lfloor \inf_{\theta}(r(\theta)\rfloor$. Let $H_\gamma$ be the operator on polynomials with the first $N$ derivatives vanishing at $0$, defined by:

\begin{equation*}
 H_\gamma \colon\sum_{n>N} a_n z^n\longmapsto \sum_{n>N} \gamma(n)a_n z^n.
\end{equation*}
The map $\gamma \in \mathcal S_r \mapsto H_\gamma$ satisfies the following continuity-like estimate: for each compact $K\noic$ and each neighborhood $V\noic$ of $K\noic$ that is star-shaped with respect to $0$, there exists a constant $C>0$ and a finite number of semi\-norms $(p_{\theta_i,\varepsilon_i})_{1\leq i\leq n}$ of 
$\mathcal S_r$ such that for every symbol $\gamma$ and polynomial of the form $f = \sum_{n>N} a_n z^n$:
\begin{equation}\label{eq:est_default}
 |H_\gamma(f)|_{L^\infty(K)} \leq C\sup_{1\leq i\leq n} 
p_{\theta_i,\varepsilon_i}(\gamma) |f|_{L^\infty(V)}.
\end{equation}
\end{theorem}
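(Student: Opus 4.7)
The plan is to realize $H_\gamma$ as an integral operator with a kernel that decouples the role of $\gamma$ from that of $f$, and then to bound the kernel by a Mellin--Barnes style contour deformation into the sectors $\Delta_\theta$ where $\gamma$ is analytic. Concretely, I would fix $z\in K$ and write, via the Cauchy coefficient formula, $a_n=\frac{1}{2\pi i}\oint_{\Sigma}f(w)w^{-n-1}\,dw$, where $\Sigma\subset V$ is a cycle encircling $0$ and the set $K$. Because $V$ is star-shaped with respect to $0$ and a neighborhood of the compact $K$, one can take $\Sigma=\lambda\,\partial V$ for some $\lambda$ slightly less than $1$ depending only on $K$ and $V$; this ensures $\Sigma\subset V$ and $|w|\ge(1+\eta)|z|$ for all $z\in K$, $w\in\Sigma$, for some $\eta>0$. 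Summing over $n>N$ and exchanging sum and integral (legitimate since $f$ is a polynomial and so only finitely many terms are nonzero), one obtains
\[
H_\gamma(f)(z)=\frac{1}{2\pi i}\oint_\Sigma f(w)\,G_z(w)\,\frac{dw}{w},\qquad G_z(w)\coloneqq\sum_{n>N}\gamma(n)(z/w)^n,
\]
so that the theorem reduces to proving $|G_z(w)|\le C\,\sup_i p_{\theta_i,\varepsilon_i}(\gamma)$ uniformly in $(z,w)\in K\times\Sigma$ for a finite family of seminorms; combining such a bound with $|f|_{L^\infty(\Sigma)}\le|f|_{L^\infty(V)}$ and the fixed length of $\Sigma$ then yields the stated inequality.

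To estimate $G_z(w)$, I would use a Norlund--Rice type representation: for $|\xi|<1$,
\[
\sum_{n>N}\gamma(n)\,\xi^n=\frac{1}{2i}\int_{\mathcal C}\gamma(\zeta)\,\frac{\xi^\zeta}{\sin(\pi\zeta)}\,d\zeta,
\]
where $\mathcal C$ is a Hankel-type contour enclosing the positive integers greater than $N$ in the half-plane $\Re\zeta>N$. This identity follows from the residue theorem, the residues of $1/\sin(\pi\zeta)$ at the integers reproducing the series on the left. Applying it with $\xi=z/w$, the hypothesis $\gamma\in\mathcal S_r$ allows one to deform $\mathcal C$ into a contour $\mathcal C_\theta$ consisting of two rays going to infinity inside $\Delta_\theta$ for some fixed aperture $\theta<\pi/2$ chosen from the geometry of $K$ and $V$, without crossing any singularity of the integrand. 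Along $\mathcal C_\theta$ one uses $|\gamma(\zeta)|\le p_{\theta,\varepsilon}(\gamma)\,e^{\varepsilon|\zeta|}$, the identity $|(z/w)^\zeta|=|z/w|^{\Re\zeta}\,e^{-(\arg(z/w))\Im\zeta}$, and the bound $|\sin(\pi\zeta)|^{-1}\lesssim e^{-\pi|\Im\zeta|}$ off the real axis; the integrand becomes a product of exponentials with overall negative exponent provided $\varepsilon$ is chosen small enough in terms of $\eta$, $\theta$, and $\sup_{K\times\Sigma}|\arg(z/w)|$. Integrating yields the desired uniform bound on $G_z(w)$, which is exactly the reduction announced in the first paragraph.

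The main obstacle is the joint tuning of the angle $\theta$ and of the $\varepsilon_i$ in the final step: one has to choose them so that (i) the contour $\mathcal C_\theta$ fits inside a single sector $\Delta_\theta$ where $\gamma$ is defined and such that $\Re\zeta>N$ along the whole contour; (ii) the decay coming from $|z/w|\le(1+\eta)^{-1}<1$ beats the subexponential growth $e^{\varepsilon|\zeta|}$; and (iii) the resulting bounds are uniform for $z\in K$ and $w\in\Sigma$. The star-shapedness of $V$ with respect to $0$ is exactly what makes a single scaled contour $\Sigma=\lambda\partial V$ work for every $z\in K$ and guarantees the uniform gap $|w|/|z|\ge 1+\eta$; without this structural hypothesis one could not keep $|z/w|$ uniformly less than $1$, and the kernel estimate would fail. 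The extraction of finitely many specific seminorms $p_{\theta_i,\varepsilon_i}$ in the statement corresponds, in the estimate, to the finitely many intermediate bounds that need to be combined (one per ray of $\mathcal C_\theta$, together with the contribution of the compact piece joining them).
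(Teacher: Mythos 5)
Note first that the paper does not prove this statement: it is imported verbatim from \cite{koenig_2017} (Theorem 18 there), and the proof in that reference does begin, like yours, with the Hadamard-product kernel $H_\gamma f(z)=\frac{1}{2i\pi}\oint_\Sigma f(w)\,g_\gamma(z/w)\,\frac{dw}{w}$, $g_\gamma(\xi)=\sum_{n>N}\gamma(n)\xi^n$, together with a N\"orlund--Rice/Lindel\"of-type contour representation of $g_\gamma$. However, your reduction rests on a geometric claim that is false: that one may choose a single cycle $\Sigma\subset V$ winding once around $0$ with $|w|\ge(1+\eta)|z|$ for all $z\in K$, $w\in\Sigma$. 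First, $\lambda\,\partial V\subset V$ can already fail for open star-shaped sets (think of a disk with a radial slit removed). More seriously, such a $\Sigma$ would have to lie entirely outside the disk of radius $\max_{K}|z|$, and nothing forces $V$ to contain a curve encircling $0$ at such moduli; in the very application in this paper, $K$ contains points of modulus arbitrarily close to $1$, while every cycle in $U$ that winds around $0$ must dip to modulus below $e^{-(1-2\varepsilon)a'^2/2}$ inside the excluded angular sector, so $|z/w|\ge 1$ is unavoidable on part of any admissible contour. A warning sign: if your claim were true, the theorem would be nearly trivial, since with $|z/w|\le(1+\eta)^{-1}<1$ one can bound $\sum_{n}|\gamma(n)|\,|z/w|^{n}$ directly using only the subexponential growth of $\gamma$ on the positive integers --- no holomorphy in sectors and no star-shapedness would be needed, whereas these hypotheses are exactly what the statement is about.

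The missing idea is the analytic continuation of the generating function: one must show that $g_\gamma$, a priori defined for $|\xi|<1$, extends holomorphically to $\mathbb{C}\setminus[1,+\infty)$ with bounds controlled by finitely many seminorms $p_{\theta,\varepsilon}(\gamma)$ (this is what the contour deformation into the sectors $\Delta_\theta$ actually buys, with constants degenerating as $\xi$ approaches $[1,+\infty)$), and then choose, for each $z\in K$, a cycle in $V$ winding once around $0$ that avoids a slightly thickened radial segment $(0,\mu z]$, so that $z/w$ stays in a fixed compact subset of $\mathbb{C}\setminus[1,+\infty)$ uniformly in $z\in K$ and $w$ on the cycle. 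This is precisely where star-shapedness of $V$ is used --- it guarantees $[0,z]\subset V$ with a uniform radial margin over the compact $K$ --- not to produce a uniform modulus gap, which is impossible in general. Your second step never confronts the region $|z/w|\ge 1$ because of the erroneous reduction, and your bound via $|\xi^\zeta|$ and $|\sin(\pi\zeta)|^{-1}$ only closes when $|\xi|<1$ or $\arg\xi$ is bounded away from $0$, which is exactly the continuation statement you would still need to prove. (A minor further slip: the kernel $\frac{1}{2i}\int_{\mathcal C}\gamma(\zeta)\,\xi^{\zeta}/\sin(\pi\zeta)\,d\zeta$ reproduces $\sum(-1)^{n}\gamma(n)\xi^{n}$; one needs $\pi\cot(\pi\zeta)$, or the factor $(-\xi)^{\zeta}$, to get the right signs.) As written, the argument does not prove the theorem.
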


\begin{theorem}[Theorem 22 and Proposition 25 of \cite{koenig_2017}]\label{th:symbols}
We remind that $\lambda_n$ is the first eigenvalue of $-\partial_x^2 +(nx)^2$ with Dirichlet boundary conditions on $(-1,1)$, $v_n$ is the associated eigenfunction that satisfies $v_n(0) = 1$ and the definition of $w_n = \eu^{(1-\varepsilon)nx^2\!/2}v_n(x)$.

There exists a non-decreasing function\footnote{In the reference \cite{koenig_2017}, it is not clear we can choose the same $r$ for (i) and (ii): we have $r_1$ such that (i) holds and $r_2$ such that (ii) holds. Then, we just have to choose $r = \max(r_1,r_2)$.} $r\colon(0,\pi/2)\to \set R$ such that for $N = \lfloor \inf_\theta r(\theta)\rfloor$:
\begin{enumerate}
\item there exists a symbol $\gamma \in\mathcal S_r$ such that for $n>N$, $\lambda_n = n + \gamma(n)\eu^{-n}$;
\item for each $x\in(-1,1)$, there exists a symbol $w(x)\in \mathcal S_r$ such that for $n>N$, $w_n(x) = w(x)(n)$, and moreover, the family $(w(x))_{-1<x<1}$ is a bounded family of $\mathcal S_r$.
\end{enumerate}
\end{theorem}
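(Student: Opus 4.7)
The plan is to analytically continue the eigenvalue $\lambda_n$ and the normalized eigenfunction $v_n$ to complex values of $n$ via Kato's analytic perturbation theory, and then estimate the continuation by comparison with the harmonic oscillator on the whole real line. Fix $\theta \in (0, \pi/2)$ and let $\Sigma_\theta = \{z \in \set C : |\arg z| < \pi/2 - \theta\}$. The family $H_z = -\partial_x^2 + (zx)^2$ on $L^2(-1,1)$ with Dirichlet boundary conditions is sectorial and holomorphic of type (B) in $z \in \Sigma_\theta$. Standard Sturm--Liouville theory for $z > 0$ gives a simple lowest eigenvalue with a uniform spectral gap; by analytic perturbation theory this extends, for $|z| > r(\theta)$ with $r(\theta)$ a non-decreasing function, to a simple eigenvalue $\lambda(z)$ holomorphic on $\Sigma_\theta \cap \{|z| > r(\theta)\}$ and agreeing with $\lambda_n$ at positive integers.

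The quantitative heart of (i) is the bound $|\lambda(z) - z| \leq C_\theta \eu^{-\operatorname{Re}(z)}$ on $\Sigma_\theta \cap \{|z| > r(\theta)\}$. One compares $H_z$ with the harmonic oscillator on $\set R$, whose ground state $\eu^{-zx^2/2}$ has eigenvalue $z$. A smooth cutoff of $\eu^{-zx^2/2}$ vanishing near $x = \pm 1$ is a quasimode of $H_z$ with defect of size $\eu^{-\operatorname{Re}(z)/2}$, which, via a sesquilinear form estimate and the spectral gap, yields $|\lambda(z) - z| \leq C \eu^{-\operatorname{Re}(z)}$. Setting $\gamma(z) = \eu^{z}(\lambda(z) - z)$ produces a holomorphic, in fact bounded, function on each $\Sigma_\theta \cap \{|z| > r(\theta)\}$; thus $p_{\theta, \varepsilon}(\gamma) < \infty$ for every $\varepsilon > 0$ and $\gamma \in \mathcal S_r$, proving (i).

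For (ii), the eigenfunction is constructed via the Riesz projector $P_z = -\frac{1}{2\pi \iu} \oint_\Gamma (H_z - \zeta)^{-1} \diff \zeta$, with $\Gamma$ a small contour around $\lambda(z)$ separating it from the rest of the spectrum; $P_z$ is holomorphic in $z$ with values in bounded operators on $L^2(-1,1)$, and by elliptic regularity, also in $C(\overline{(-1,1)})$. Applying $P_z$ to a fixed even smooth bump $\varphi$ concentrated near $x = 0$ yields a holomorphic family $\tilde v_z := P_z \varphi$ of eigenfunctions, non-vanishing at $x = 0$ for $|z|$ large (since $v_n(0) \neq 0$ for real $n$ and parity is preserved under the holomorphic continuation). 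Normalize $v_z = \tilde v_z / \tilde v_z(0)$ and define $w(x)(z) = \eu^{(1-\varepsilon) z x^2/2} v_z(x)$. The main obstacle, which I expect to be the hardest step, is an Agmon-type weighted energy estimate for $(H_z - \lambda(z)) v_z = 0$ with weight $\eu^{(1-\varepsilon) \operatorname{Re}(z) x^2/2}$, chosen strictly below the natural Agmon weight $\eu^{\operatorname{Re}(z) x^2/2}$ so the commutator term can be absorbed into the positive part of the form. This produces a pointwise bound $|w(x)(z)| \leq C_\theta$ uniform in $x \in (-1,1)$ and in $z \in \Sigma_\theta \cap \{|z| > r(\theta)\}$, giving simultaneously $w(x) \in \mathcal S_r$ and the boundedness of the family $(w(x))_{x \in (-1,1)}$ in $\mathcal S_r$. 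After enlarging $r$ if needed we may take the same $r$ for both (i) and (ii).
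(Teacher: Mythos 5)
The paper does not prove this statement at all --- it is imported verbatim from \cite{koenig_2017}, where it is established by an explicit ODE analysis: the even solution of $-v''+(zx)^2v=\lambda v$ with $v(0)=1$, $v'(0)=0$ is controlled for \emph{complex} $z$ by Volterra/special-function representations, the eigenvalue is located by a Rouch\'e-type argument applied to the entire function $\lambda\mapsto v(1;z,\lambda)$, and the symbol bounds on $w(x)$ come from the same quantitative solution estimates. That route never uses self-adjointness, which is exactly what your proposal implicitly relies on, and this is where it breaks.

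The genuine gap is the step ``standard Sturm--Liouville theory gives a uniform spectral gap; by analytic perturbation theory this extends to a holomorphic simple eigenvalue on $\Sigma_\theta\cap\{|z|>r(\theta)\}$.'' Kato-type perturbation from the self-adjoint points $z=n$ only reaches as far as the perturbation $(z^2-n^2)x^2$ is small compared with the resolvent bound, and for real $n$ the gap is of size $\sim n$, so this argument yields holomorphic continuation only in an $O(1)$-neighbourhood of the positive real axis --- not in a sector of half-angle up to $\pi/2$, which is what membership in $\mathcal S_r$ requires. Away from the real axis $H_z=-\partial_x^2+z^2x^2$ is genuinely non-self-adjoint; once $\lvert\arg z\rvert>\pi/4$ one even has $\operatorname{Re}(z^2)<0$, so there is no positive part of the form to absorb anything into. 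Consequently every later step inherits the same missing ingredient: a small quasimode defect does not imply an eigenvalue nearby without a resolvent bound (this is precisely the pseudospectral instability of the complex harmonic oscillator), the Riesz projector $P_z$ needs a contour on which the resolvent is uniformly bounded over the whole sector, and the proposed Agmon estimate with weight $\eu^{(1-\varepsilon)\operatorname{Re}(z)x^2/2}$ has no coercive term to run on when $\operatorname{Re}(z^2)\leq 0$. These are not presentational issues: obtaining the sector-wide continuation and the subexponential bounds in the regime $\arg z$ close to $\pm\pi/2$ is the actual content of Theorem~22 and Proposition~25 of \cite{koenig_2017}, and your sketch replaces it with self-adjoint intuition that is unavailable there. (Minor additional point: a cutoff supported away from $x=\pm1$ only gives a defect $\eu^{-(1-\delta)^2\operatorname{Re}(z)/2}$, and passing from defect to $\lvert\lambda(z)-z\rvert\leq C\eu^{-\operatorname{Re} z}$ again uses a gap-plus-projection argument that is only valid in the self-adjoint case; this would still be compatible with the statement, since $\gamma$ is only required to have subexponential growth, but it again presupposes the missing resolvent control.)
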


\begin{proof}[Proof of Lemma \ref{th:est_hol}]
We want to bound $\sum_{n>N} w_n(x)\eu^{-\rho_n\tau} a_n z^{n-1}$ by $\sum_{n>N} a_n z^{n-1}$. To do this, we prove that $w_n(x)\eu^{-\rho_n\tau}$ is a symbol in the sense of Definition~\ref{def:symbol}, and then apply the Theorem~\ref{th:default}.

Let $\gamma\in \mathcal S_r$ and $w(x)\in\mathcal S_r$ obtained by Theorem~\ref{th:symbols} and let $\rho(\alpha)\coloneqq \gamma(\alpha)\eu^{-\alpha}$. Also let $N = \lfloor\inf_{\theta}r(\theta)\rfloor$. The function $\rho$ satisfies for every $n>N$, $\rho(n) = \rho_n$. Finally, for 
$0< \tau < T$ and $x\in(-1,1)$, let $\gamma_{\tau,x}$ defined by:
\[\gamma_{\tau,x}(\alpha) = w(x)(\alpha+1)\eu^{-\rho(\alpha+1)\tau},\]
so that:
\begin{equation}\label{eq:lemma}\sum_{n> N+1} w_n(x)\eu^{-\rho_n\tau} a_n z^{n-1} = H_{\gamma_{\tau,x}}\left(\sum_{n>N+1} a_n z^{n-1}\right).
\end{equation}
Note that we evaluated $w(x)$ and $\rho$ in $\alpha+1$ instead of $\alpha$ because we want to multiply $z^{n-1}$ by $w_n(x)\eu^{-\rho_n\tau}$. This is not a problem because the domain of definition of a symbol is invariant by $z\mapsto z+1$, and thus if $\gamma\in\mathcal S_r$ is a symbol, then so is $\tilde\gamma = \gamma(\cdot+1)$, and we moreover have $p_{\theta,\varepsilon}(\tilde\gamma) \leq p_{\theta,\varepsilon}(\gamma)$.

We then show that the family $(\gamma_{\tau,x})_{0<\tau<T, x\in(-1,1)}$ is in $\mathcal S_r$, and is bounded. Since $\rho(\alpha) = e^{-\alpha} \gamma(\alpha)$ with $\gamma$ having sub-exponential growth (by definition of $\mathcal S_r$), $|\rho(\alpha)|$ is bounded on every $\Delta_{\theta}$ by some $c_\theta$. So, we have for $0<\tau <T$ and $\alpha\in \Delta_{\theta}$:

\begin{equation*}
 |\eu^{-\rho(\alpha)\tau}| \leq\eu^{|\rho(\alpha)\tau|}\leq \eu^{T c_\theta}.
\end{equation*}
So $\eu^{-\rho(\alpha)\tau}$ is bounded for $\alpha \in \Delta_{\theta}$, and in particular has sub-exponential growth. Since $\rho$ is holomorphic, so is $\alpha\mapsto \eu^{-\rho(\alpha)\tau}$, thus, $\alpha\mapsto \eu^{-\rho(\alpha)\tau}$ is in $\mathcal S_r$. Moreover, the bound $|\eu^{-\rho(\alpha)\tau}| \leq e^{Tc_\theta}$ is uniform in $0<\tau <T$, so $(\eu^{-\rho\tau})_{0<\tau<T}$ is a bounded family of 
$\mathcal S_r$.

Moreover, we already know that $(w(x))_{x\in(-1,1)}$ is a bounded family in $\mathcal S_r$. Since $\gamma_{\tau,x}$ is the multiplication of $w(x)$ and $\eu^{-\rho \tau}$ and since the multiplication is continuous\footnote{See for instance~\cite[Proposition~12]{koenig_2017}, but it is elementary.} in $\mathcal S_r$, $(\gamma_{\tau,x})_{0<\tau<T, x\in(-1,1)}$ is a bounded family of $\mathcal S_r$.

So according to the estimate~\eqref{eq:est_default} of Theorem~\ref{th:default}, if $V$ is a bounded neighborhood of $K$ that is star-shaped with respect to $0$, there exists $C>0$ independent of $\zeta,x$, such that:

\[\bigg|H_{\gamma_{\tau,x}}\bigg(\sum_{n> N+1} a_n z^{n-1}\bigg)\bigg|_{L^\infty(K)}\mkern-10mu 
\leq 
C\bigg|\sum_{n> N+1} a_nz^{n-1}\bigg|_{L^\infty(V)}.\]

So, thanks to equation \eqref{eq:lemma}:

\begin{equation*}
\bigg|\sum_{n> N} w_n(x) a_n z^{n-1}\eu^{-\rho_n\tau}\bigg|_{L^\infty(K)}\mkern-15mu
\leq  C\bigg|\sum_{n> N} a_nz^{n-1}\bigg|_{L^\infty(V)}\mkern-5mu.\qedhere
\end{equation*}
\end{proof}

\section{Computation of the minimal time for some control domains}\label{sec:cor}
\begin{figure}
\centering
\begin{tikzpicture}[scale=2.5]
\draw[fill=green!50] (-0.4,0) .. controls (-0.6,0.1) and (-1,0.1) .. (-1,0.2)  .. controls (-1,0.6) and (0.5,0.8) .. (0.5,1)node[pos = 0.5](v3){} -- (-1,1) .. controls (-0.7,0.9) and (-0.7,0.9) .. (-0.7,0.8) .. controls (-0.7,0.7) and (-1.2,0.5) .. (-1.8,0)node[pos = 0.5](v2){} -- cycle;
\draw[dashed] (-1,0.2) node (v1) {} |- (0,0) node[pos=0.5,below]{$-a$};
  \draw (-2,0) rectangle (2,1);
 \draw[->] (-2.2,0) -- (2.2,0) node[above]{$x$};
 \draw[->] (0,-0.2) -- (0,1.2) node[right]{$y$};
\draw (v1.center) -| (0,0) node[pos = 0.5,right]{$y_0$};
\node[fill = white, fill opacity = 0.4, text opacity = 1] at (-0.5,0.8) {$\omega$};
\draw[->] (-1.5,0.7)node[left]{$\gamma_1$} -- (v2.center);
\draw[->] (0.5,0.5) node[right]{$\gamma_2$} -- (v3.center);
\draw[blue, thick] (-0.8,0) .. controls (-1,0.1) and (-1.05,0.1) .. (-1.05,0.2)  .. controls (-1,0.6) and (0,0.8) .. (0,1) node[pos = 0](v4) {};
\draw[->, blue] (-1.6,0.3)node[left]{$\gamma$} -- (v4.center);
\end{tikzpicture}
\caption{In green, the domain $\omega$. At $y=y_0$, the function $\max(\gamma_2^-,\gamma_1^+)$ takes its maximum $a$. Then, the open interval $\{(x,y_0),-a<x<a\}$ is disjoint from $\overline{\omega}$. So, the Grushin equation is not null-controllable in time $T<a^2\!/2$. Also, if we take a path $\gamma$ (here in blue) from the bottom boundary to the top boundary that is close to the boundary of $\omega$ around $y = y_0$, then, we can apply Theorem~\ref{th:main_postiive}, and the Grushin equation is null-controllable in time $T>a^2\!/2$.}
\label{fig:omega_corollary}
\end{figure}
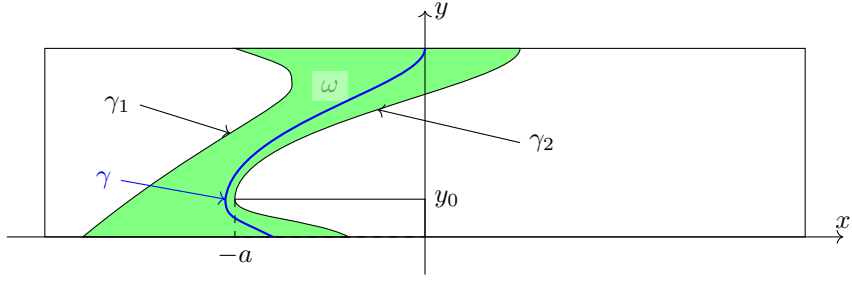

In this section, we prove the Corollary~\ref{cor}. By looking at Fig.~\ref{fig:omega_corollary}, one can be convinced Theorems~\ref{th:main_postiive} and~\ref{th:main_negative} will give $a^2\!/2$ as a minimal time of null-controllability, but let us actually prove it. Let us recall that $\omega= \{(x,y)\in\Omega,\gamma_1(y)<x<\gamma_2(y)\}$ and that $a = \max(\max(\gamma_2^-),\max(\gamma_1^+))$.

\noindent\textit{First step: lower bound of the minimal time.} For this step, we only have to treat the case $a>0$. By definition of $a$, for every $\varepsilon>0$, there exists $y_\varepsilon\in(0,\pi)$ such that $\gamma_2(y_\varepsilon)<-a+\varepsilon$ or $\gamma_1(y_\varepsilon)>a-\varepsilon$. Then, since $\gamma_1<\gamma_2$, the segment $\{(x,y_\varepsilon),|x|<a-\varepsilon\}$ is dijoint from $\overline\omega$, and thanks to Theorem~\ref{th:main_negative}, the Grushin equation~\eqref{eq:control_problem_eng} is not null-controllable on $\omega$ in time $T<(a-\varepsilon)^2\!/2$. This is true for every $\varepsilon>0$.

\noindent\textit{Second step: upper bound of the minimal time.} Let $\varepsilon>0$ small enough so that $\gamma_2-\gamma_1 > \varepsilon$. Let $\tilde\gamma_1 =\max(\gamma_1, -a-\varepsilon)$ and $\tilde\gamma_2 =\min(\gamma_2, a+\varepsilon)$. By using the information $\gamma_2-\gamma_1>\varepsilon$, $\gamma_2\geq -a$, $\gamma_1\leq a$ and by looking at the different cases, we readily get $\tilde \gamma_2 - \tilde \gamma_1 \geq \varepsilon$. Then, we define the path 

\[\gamma\colon s\in[0,\pi] \mapsto \left(\frac{\tilde\gamma_1(s) + \tilde\gamma_2(s)}2, s\right).\]
This path goes from the bottom boundary to the top boundary, and satisfies $\lvert\abscissa(\gamma)\rvert\leq a+\varepsilon$ and $\gamma_1 +\varepsilon/2 \leq \abscissa(\gamma) \leq \gamma_2-\varepsilon/2$. Therefore, for $\eta>0$ small enough, we have

\[\omega_0 \coloneqq \{z\in\Omega, \distance(z,\,\ran(\gamma))<\eta\}\subset \omega.\]
Thus, the Theorem~\ref{th:main_postiive} tells us the Grushin equation~\eqref{eq:control_problem_eng} is null-controllable in time $T>(a+\varepsilon)^2\!/2$. This is true for every $\varepsilon>0$.\hfill \qed

\begin{appendix}
\section{Proof of Theorem \ref{th:equiv}}\label{sec:int_control}
In this section, we will show that the boundary null-controllability of System~\eqref{eq:control_problem_eng}
(this notion is recalled in Definition~\ref{def:contr bord}) 
implies the internal null-controllability 
of System~\eqref{eq:control_problem_eng}. 
The argument is standard, but for the sake of clarity, we include it in the present paper.

Let $\Omega_L:=(-L,1)\times(0,\pi)$ and $\Gamma_L:=\{-L\}\times(0,\pi)$ with $L\in(0,1)$.
Consider the system
\begin{equation}\label{eq:control bord}
\left\{
\begin{array}{ll}
(\partial_t - \partial_x^2 - x^2\partial_y^2)f(t,x,y)  = 0 &  t\in [0,T], (x,y)\in \Omega_L,\\
f(t,x,y) = 0 &  t \in [0,T], (x,y)\in \partial\Omega_L\backslash\Gamma_L,\\
f(t,x,y) = v(t,y) &  t \in [0,T], (x,y)\in \Gamma_L,\\
f(0,x,y) = f_0(x,y)& (x,y)\in \Omega_L,
\end{array}
\right.
\end{equation}
where $f_0\in L^2(\Omega_L)$ is the initial data and $v\in L^2((0,T)\times\Gamma_L)$ is the control.
The solution to System~\eqref{eq:control bord} will be considered in the following sense:

\begin{definition}\label{def:trans}
Let $f_0\in V(\Omega_L)'$ and $v\in L^2((0,T)\times\Gamma_L)$ be given. It  will be said that $y\in L^2((0,T)\times\Omega_L)$ 
is a \emph{solution by transposition} to System~\eqref{eq:control bord}, if 
for each $g\in L^2((0,T)\times\Omega_L)$, we have:
\begin{equation*}\int_{(0,T)\times\Omega_L}f(t,x,y)g(t,x,y)\diff x\diff y\diff t =\langle f_0,\varphi(0)\rangle_{V(\Omega_L)',V(\Omega_L)}
+\int_0^T\int_0^1v(t,y)\varphi_x(t,-L,y)\diff y\diff t,\end{equation*}
where $\varphi$ is the solution to
\begin{equation}\label{eq:dual}
\left\{
\begin{array}{ll}
(-\partial_t - \partial_x^2 - x^2\partial_y^2)\varphi(t,x,y)  = g(t,x,y) &  t\in [0,T], (x,y)\in \Omega_L,\\
\varphi(t,x,y) = 0 &  t \in [0,T], (x,y)\in \partial\Omega_L,\\
\varphi(T,x,y) = 0& (x,y)\in \Omega_L.
\end{array}
\right.
\end{equation}
\end{definition}
Using standard argument of the semi-group theory, 
one can prove the well posedness of System~\eqref{eq:control bord}
(see for instance 
\cite[Prop. 2.2]{Fernandez2010})
in the case of the heat equation):
\begin{prop}
For all $f_0\in V(\Omega_L)'$ and $v\in L^2((0,T)\times\Gamma_L)$,
there exists a unique solution by transposition 
to System~\eqref{eq:control bord}.
\end{prop}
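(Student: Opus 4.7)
The plan is to apply a standard transposition argument. Given $f_0 \in V(\Omega_L)'$ and $v \in L^2((0,T)\times\Gamma_L)$, I would define $f$ by invoking the Riesz representation theorem on the right-hand side of the identity in Definition~\ref{def:trans}, viewed as a continuous linear form of the test function $g$. Two ingredients are needed: well-posedness of the adjoint problem~\eqref{eq:dual} giving $\varphi(0) \in V(\Omega_L)$ with continuous dependence on $g$, and a hidden regularity estimate showing that $\varphi_x(\cdot,-L,\cdot) \in L^2((0,T)\times\Gamma_L)$ depends continuously on $g$.

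The first ingredient is standard: the operator $A = -\partial_x^2 - x^2\partial_y^2$ with Dirichlet boundary conditions is self-adjoint and nonnegative on $L^2(\Omega_L)$ with form domain $V(\Omega_L)$, so~\eqref{eq:dual} (after time reversal) fits the framework of maximal $L^2$-regularity for self-adjoint operators, which yields a unique solution $\varphi$ satisfying
\[
|\varphi|_{C([0,T];V(\Omega_L))} + |\partial_t\varphi|_{L^2((0,T)\times\Omega_L)} + |A\varphi|_{L^2((0,T)\times\Omega_L)} \leq C|g|_{L^2((0,T)\times\Omega_L)}.
\]

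The second ingredient would be obtained by a \emph{localized} multiplier argument. Let $\eta \in C^\infty([-L,1])$ satisfy $\eta(-L)=1$ and $\supp(\eta) \subset [-L,-L/2]$, so that $\eta$ is supported away from the degeneracy line $\{x=0\}$. At almost every time $t\in(0,T)$, I would test~\eqref{eq:dual} against $\eta(x)\partial_x\varphi$ and integrate over $\Omega_L$. Integration by parts in $x$ on $-\partial_x^2\varphi \cdot \eta(x)\partial_x\varphi$ produces the positive boundary contribution $\frac12\int_0^\pi|\varphi_x(t,-L,y)|^2\diff y$ (the term at $x=1$ vanishes since $\eta(1)=0$) plus an interior term bounded by $|\partial_x\varphi(t)|_{L^2}^2$. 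Successive integrations by parts in $y$ (boundary terms vanish since $\varphi=0$ at $y=0,\pi$) then in $x$ (boundary terms vanish since $\varphi=0$ at $x=-L,1$) on $-x^2\partial_y^2\varphi\cdot\eta(x)\partial_x\varphi$ produce an interior term of the form $-\frac12\int\partial_x[x^2\eta(x)](\partial_y\varphi)^2\diff x\diff y$; since $\partial_x[x^2\eta]$ is supported in $\{|x|\geq L/2\}$, one has $|\partial_x[x^2\eta]| \leq C x^2$ there, so this term is controlled by $|x\partial_y\varphi(t)|_{L^2}^2 \leq |\varphi(t)|_V^2$. The remaining contributions $\int_{\Omega_L}\partial_t\varphi\cdot\eta\partial_x\varphi$ and $\int_{\Omega_L}g\cdot\eta\partial_x\varphi$ are handled by Cauchy--Schwarz. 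Integrating in $t$ and using the regularity from the first ingredient, we obtain $|\varphi_x(\cdot,-L,\cdot)|_{L^2((0,T)\times\Gamma_L)} \leq C|g|_{L^2((0,T)\times\Omega_L)}$.

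With both ingredients at hand, the linear map
\[
\Lambda \colon g \longmapsto \langle f_0,\varphi(0)\rangle_{V(\Omega_L)',V(\Omega_L)} + \int_0^T\int_0^\pi v(t,y)\varphi_x(t,-L,y)\diff y\diff t
\]
is continuous on $L^2((0,T)\times\Omega_L)$, of norm bounded by $C(|f_0|_{V(\Omega_L)'} + |v|_{L^2((0,T)\times\Gamma_L)})$. The Riesz representation theorem then furnishes a unique $f \in L^2((0,T)\times\Omega_L)$ such that $\int fg = \Lambda(g)$ for every $g$, which is exactly the defining relation of the transposition solution. The main obstacle is really the second step: localizing the multiplier away from $\{x=0\}$ is what makes the degenerate contribution $-x^2\partial_y^2$ absorbable into the $V$-norm of $\varphi$, which is the price to pay for $A$ not being uniformly elliptic; with the standard (non-localized) multiplier $(1-x)\partial_x\varphi$ one would meet an uncontrolled term $\int|x|(\partial_y\varphi)^2$.
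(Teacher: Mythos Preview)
The paper does not actually prove this proposition: it simply invokes ``standard argument of the semi-group theory'' and refers to \cite[Prop.~2.2]{Fernandez2010} for the analogous statement in the heat-equation setting. Your proposal therefore supplies substantially more than the paper does, and the argument is correct.

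The key technical point you identify --- localizing the multiplier $\eta(x)\partial_x\varphi$ away from the degeneracy line $\{x=0\}$ so that the contribution of $-x^2\partial_y^2\varphi$ reduces to a term controlled by $|x\partial_y\varphi|_{L^2}^2\leq|\varphi|_V^2$ --- is precisely where the Grushin case departs from the uniformly elliptic one, and your treatment of it is right. One small remark: as written, the integrations by parts are formal (they presuppose that $\partial_x\partial_y\varphi$ makes sense on $\supp\eta$ and that $\varphi_x(\cdot,-L,\cdot)$ is a well-defined trace); in a fully rigorous version one first proves the hidden regularity estimate for smooth data $g$, for which $\varphi$ is smooth near $x=-L$ by local parabolic regularity (the operator being uniformly elliptic there), and then passes to the limit by density. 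This is routine and does not affect the validity of your approach.
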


We now recall the notion of null-controllability of System~\eqref{eq:control bord}
(or boundary null-controllability of System~\eqref{eq:control_problem_eng}):
\begin{definition}\label{def:contr bord}
System~\eqref{eq:control bord} is said to be \emph{null-controllable} in time $T$ if for each 
initial data $f_0\in L^2(\Omega_L)$
there exists a control $v\in L^2((0,T)\times\Gamma_L)$ such that $f(T,\cdot,\cdot)=0$
in $V(\Omega_L)'$.
\end{definition}

The observability estimates obtained in \cite[Th. 1.4]{beauchard_2018}  can be interpreted in terms of controllability 
as follows:
\begin{theorem}\label{th:bord}
One has
\begin{enumerate}
\item For each $T>L^2\!/2$, System~\eqref{eq:control bord} is null-controllable in time $T$.
\item For each $T<L^2\!/2$, System~\eqref{eq:control bord} is not null-controllable in time $T$.
\end{enumerate}
\end{theorem}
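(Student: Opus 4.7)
The plan is to apply the standard Hilbert Uniqueness Method (HUM), which translates Theorem~\ref{th:bord} into an observability inequality for the adjoint system \eqref{eq:dual}. Specifically, using the transposition identity of Definition~\ref{def:trans} with test functions $g$ concentrated at time $T$, System~\eqref{eq:control bord} is null-controllable in time $T$ from initial data in $L^2(\Omega_L)$ using controls in $L^2((0,T)\times \Gamma_L)$ if and only if there exists $C_T>0$ such that for every $\varphi_T \in L^2(\Omega_L)$, the solution $\varphi$ of \eqref{eq:dual} with $g=0$ and final data $\varphi_T$ satisfies
\begin{equation*}
\lVert \varphi(0,\cdot,\cdot)\rVert_{L^2(\Omega_L)}^2 \leq C_T \int_0^T\!\!\int_0^\pi |\partial_x\varphi(t,-L,y)|^2 \,dy\,dt.
\end{equation*}

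To prove or disprove this observability estimate, I would decompose $\varphi$ in a Fourier series in $y$, writing $\varphi(t,x,y) = \sum_{n\geq 1} \varphi_n(t,x)\sin(ny)$. Each coefficient solves the one-dimensional adjoint equation $(-\partial_t - \partial_x^2 + n^2 x^2)\varphi_n = 0$ on $(-L,1)$ with homogeneous Dirichlet conditions and final datum $\varphi_n(T,\cdot)$. By $L^2(0,\pi)$-orthogonality of $(\sin(ny))_{n\geq 1}$, the 2D observability inequality above is equivalent to the summed family of 1D estimates
\begin{equation*}
\sum_{n\geq 1} \lVert \varphi_n(0,\cdot)\rVert_{L^2(-L,1)}^2 \leq C_T \sum_{n\geq 1} \int_0^T |\partial_x\varphi_n(t,-L)|^2 dt,
\end{equation*}
which in turn is implied by a uniform-in-$n$ control of the 1D observability constants.

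Such mode-by-mode estimates are precisely the content of \cite[Th.~1.4]{beauchard_2018}: the 1D observability constant on $(-L,1)$ with observation at $x=-L$ is uniform in $n$ if and only if $T > L^2\!/2$. For $T > L^2\!/2$, summing these estimates yields the 2D observability inequality and hence assertion \textit{(i)}. For $T < L^2\!/2$, the same reference provides a sequence of single-mode quasi-solutions $\varphi_n$ for which the 1D observability ratio blows up as $n\to\infty$; plugging any of the corresponding functions $\varphi_n(t,x)\sin(ny)$ as test function in the 2D inequality refutes it, proving assertion \textit{(ii)}.

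The principal technical point, and the only place where the degeneracy of the Grushin operator plays a role, is the rigorous justification of the normal trace $\partial_x \varphi(\cdot,-L,\cdot) \in L^2((0,T)\times(0,\pi))$ and of the HUM duality in the transposition framework introduced in Definition~\ref{def:trans}. Since the control boundary $\Gamma_L$ is uniformly separated from the degeneracy line $\{x=0\}$, the operator $\partial_x^2 + x^2\partial_y^2$ is uniformly elliptic in a neighborhood of $\Gamma_L$ and classical hidden-regularity estimates for non-degenerate parabolic equations apply, so that no new difficulty arises compared to the standard heat-equation boundary controllability argument recalled in \cite[Theorem~2.2]{ammar2007}.
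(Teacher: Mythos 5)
Your proposal is correct and matches the paper's (very brief) approach: the paper itself does not give a detailed proof of Theorem~\ref{th:bord}, merely asserting that it is the controllability interpretation of the observability estimates in \cite[Th.~1.4]{beauchard_2018}, and your HUM duality together with the $\sin(ny)$ Fourier-mode reduction is exactly the mechanism by which that interpretation goes through. The only minor remark is that \cite{beauchard_2018} may already state its observability result at the level of the full 2D system, in which case your explicit mode-by-mode decomposition (while correct) is re-deriving an intermediate step of that reference rather than being strictly necessary.
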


We will now prove Theorem~\ref{th:equiv}:

\begin{proof}[Proof of Theorem~\ref{th:equiv}]
Assume that  $T:=(a+\varepsilon)^2\!/2$ with $\varepsilon\in (0,1-a)$ and let 
$f_0\in L^2(\Omega)$.
Using Item (i) of Theorem~\ref{th:bord} for $L:=a+\varepsilon/2$,
consider the controlled solution  $f_{\text{boundary}}$ to System~\eqref{eq:control bord} in time $T$ for the initial data $f_{0|\Omega_L}$ 
with a control $v\in L^2((0,T)\times\Gamma_L)$.
Denote by $f_{\text{free}}$ the uncontrolled solution to System~\eqref{eq:control_problem_eng} for $u:=0$. 
We define 

\begin{equation*}
f_{\text{internal}}:=\eta\theta_1 f_{\text{free}}+(1-\theta_1) f_{\text{boundary}},
\end{equation*}
where $\theta_1\in  C^{\infty}([-1,1])$ and $\eta\in  C^{\infty}([0,T])$ satisfy

\begin{equation*}
\left\{\begin{array}{ll}
\theta_1(x)=1&x\in[-1,-a-\varepsilon/3],\\
\theta_1(x)=0&x\in[-a-\varepsilon/4,1],\\
0\leq\theta_1(x)\leq1&x\in[-1,1]
\end{array}\right.
~~\text{ and }~~
\left\{\begin{array}{ll}
\eta(t)=1&t\in[0,T/3],\\
\eta(t)=0&t\in[2T/3,T],\\
0\leq\eta(t)\leq1&t\in[0,T].
\end{array}\right.
\end{equation*}
For each $g\in L^2((0,T)\times\Omega)$, 
if we denote by $\varphi$ the corresponding solution to System~\eqref{eq:dual},
we have

\begin{align*}
\displaystyle\int_{(0,T)\times\Omega}f_{\text{internal}}g
&=\displaystyle\int_{(0,T)\times\Omega}
(\eta\theta_1 f_{\text{free}}+(1-\theta_1) f_{\text{boundary}})
(-\partial_t - \partial_x^2 - x^2\partial_y^2)\varphi\\
&=\displaystyle\int_{(0,T)\times\Omega}
 f_{\text{free}}(-\partial_t - \partial_x^2 - x^2\partial_y^2)(\eta\theta_1\varphi)\\
&\quad+\displaystyle\int_{(0,T)\times\Omega}f_{\text{free}}(\partial_t\eta\theta_1\varphi+\eta\partial_x^2\theta_1\varphi+2\eta\partial_x\theta_1\partial_x\varphi)\\
&\quad+\displaystyle\int_{(0,T)\times\Omega_L} f_{\text{boundary}}(-\partial_t - \partial_x^2 - x^2\partial_y^2)[(1-\theta_1)\varphi]\\
&\quad-\displaystyle\int_{(0,T)\times\Omega}f_{\text{boundary}}(\partial_x^2\theta_1\varphi+2\partial_x\theta_1\partial_x\varphi).
\end{align*}
Using Definition \ref{def:trans}, we deduce that

\begin{align*}
\displaystyle\int_{(0,T)\times\Omega}f_{\text{internal}}g
&=\langle f_0,\theta_1\varphi(0)\rangle_{V(\Omega)',V(\Omega)}
+\displaystyle\int_{(0,T)\times\Omega}f_{\text{free}}(\partial_t\eta\theta_1\varphi+\eta\partial_x^2\theta_1\varphi+2\eta\partial_x\theta_1\partial_x\varphi)\\
&\quad+\langle f_{0|\Omega_L},(1-\theta_1)\varphi(0)\rangle_{V(\Omega_L)',V(\Omega_L)}
-\displaystyle\int_{(0,T)\times\Omega}f_{\text{boundary}}(\partial_x^2\theta_1\varphi+2\partial_x\theta_1\partial_x\varphi)\\
&=\langle f_0,\varphi(0)\rangle_{L^2(\Omega)}
+\displaystyle\int_0^T\langle h,\varphi\rangle_{V(\Omega)',V(\Omega)},
\end{align*}
where 

\begin{equation*}
h\coloneqq f_{\text{free}}(\partial_t\eta\theta_1+\eta\partial_x^2\theta_1)
-2\partial_x(f_{\text{free}}\eta\partial_x\theta_1)
-f_{\text{boundary}}\partial_x^2\theta_1+2\partial_x(f_{\text{boundary}}\partial_x\theta_1)
\in L^2(0,T;V(\Omega)').
\end{equation*}
We deduce that $f_{\text{internal}}$ is the unique weak solution in 
$ C([0,T];L^2(\Omega))\cap L^2(0,T;V(\Omega))$ to

\begin{equation*}
\left\{
\begin{array}{ll}
(\partial_t - \partial_x^2 - x^2\partial_y^2)f_{\text{internal}}(t,x,y)  = h(t,x,y) &  t\in [0,T], (x,y)\in \Omega,\\
f_{\text{internal}}(t,x,y) = 0 &  t \in [0,T], (x,y)\in \partial\Omega,\\
f_{\text{internal}}(0,x,y) = f_0(x,y)& (x,y)\in \Omega
\end{array}
\right.
\end{equation*}
and satisfies $f_{\text{internal}}(T,\cdot,\cdot)=0$. The control $h$ is not regular enough --- it is in $L^2(0,T;V(\Omega)')$ instead of $L^2((0,T)\times \Omega)$ --- so we regularize it again by defining 
\[f\coloneqq\eta\theta_2 f_{\text{free}}+(1-\theta_2) f_{\text{internal}},\]
where $\theta_2\in  C^{\infty}([-1,1])$  satisfies

\begin{equation*}
\left\{\begin{array}{ll}
\theta_2(x)=1&x\in[-1,-a-\varepsilon/5],\\
\theta_2(x)=0&x\in[-a-\varepsilon/6,1],\\
0\leq\theta_2(x)\leq1&x\in[-1,1].
\end{array}\right.
\end{equation*}
Again, for each $g\in L^2((0,T)\times\Omega)$, 
if we denote by $\varphi$ the solution to System~\eqref{eq:dual}, we have

\begin{equation*}
\int_{(0,T)\times\Omega}fg
=\langle f_0,\varphi(0)\rangle_{L^2(\Omega)}
+\displaystyle\int_{(0,T)\times\Omega} u\varphi,
\end{equation*}
where 

\[u:=f_{\text{free}}(\partial_t\eta\theta_2+\eta\partial_x^2\theta_2)
-2\partial_x(f_{\text{free}}\eta\partial_x\theta_2)
-f_{\text{internal}}\partial_x^2\theta_2+2\partial_x(f_{\text{internal}}\partial_x\theta_2)
\in L^2((0,T)\times\Omega).\]
We deduce that $f$ is the unique weak solution in 
$ C([0,T];L^2(\Omega))\cap L^2(0,T;V(\Omega))$ to System~\eqref{eq:control_problem_eng}
with the control $u$ and satisfying $f(T,\cdot,\cdot)=0$. 
Moreover, we remark that $u$ is localized in $\omega$, which concludes the proof.
\end{proof}

\section{Proof of the existence of the cutoff (Lemma~\ref{th:cutoff})}\label{sec:cutoff_proof}
We consider $\tilde\gamma\in C^{0}(\set{R};\set{R}^2)$ the path $\gamma$ that we extend vertically on the top and bottom. I.e. if we note $\gamma(0) = (x_0,0)$ and $\gamma(1) = (x_1,\pi)$, we set 

\begin{equation*}
\tilde\gamma(s)= \left\{\begin{array}{ll}
\gamma(s)&\mbox{ for all }s\in[0,1],\\
(x_0,s)&\mbox{ for all }s<0,\\
(x_1,\pi+s-1)&\mbox{ for all }s>1.
\end{array}\right.
\end{equation*}
Denote by $\tilde\omega_{\text{right}}$ the connected component of $\set R^2\setminus \tilde \gamma(\set{R})$ containing $(2,0)$ (for instance), i.e. the set that is ``right of $\tilde\gamma$''. Then set $\tilde \theta := \mathbf 1_{\tilde\omega_{\text{right}}}$ the indicator function  of $\tilde \omega_{\text{right}}$. Finally, choosing a smooth mollifier $\rho_\varepsilon$ that has its support in $B(0,\varepsilon/2)$, we set $\theta: = \rho_{\varepsilon}\ast \tilde\theta$.

Since $\tilde \theta$ is locally constant outside of $\tilde \gamma(\set{R})$, $\theta$ is locally constant around each $z\in\set R^2$ such that $\distance(z,\tilde\gamma(\set{R}))>\varepsilon/2$. With the definition of $\omega_0$ (Eq.~\eqref{omega_0}), this proves that $\supp(\nabla\theta)\cap \Omega\subset \omega_0$. Now, by the definition of $a$ (Eq.~\eqref{def a}), if $z= (x,y)$ is in $\omega_{\text{right}}\setminus \omega_0$, i.e. if $a<x<1$ but $z$ is not in $\omega_0$, then the open segment $\{(x',y), x<x'<1\}$ lies outside of $\omega_0$. Thus, we can see that $z$ is in the connected component of $(2,0)$, i.e. it is in $\tilde \omega_{\text{right}}$ and since it is not in $\omega_0$, $\distance(z,\tilde\gamma(\set R))>\varepsilon$, and thus $\theta(z) = 1$. Similarly, if $z$ is in $\omega_{\text{left}}\setminus \omega_0$, then $z$ is in the connected component of $(-2,0)$, which is not in the same connected component as $(2,0)$, and thus $\theta(z) = 0$.\hfill\qed

\section{Sketch of the proof of Theorem~\ref{th:main_neg_R}}\label{sec:neg_R}
The proof of Theorem~\ref{th:main_neg_R} goes along the same lines as the proof of Theorem~\ref{th:main_negative}, but is actually simpler: the eigenfunctions are exactly the Gaussian $\eu^{-nx^2\!/2}$, with associated eigenvalue exactly $n$, and we don't need the Lemma~\ref{th:est_hol}, nor any equivalent.

Following the proof until equation~\eqref{eq:obs3.6}, with the change of variables $z_x = e^{-t+iy -x^2\!/2}$, we find that the null-controllability on $\omega$ (as defined in the statement of the Theorem) would imply that with $\mathcal D_x = \{\eu^{-T-x^2\!/2}<|z|<\eu^{-x^2\!/2}, ||\arg(z)|-y_0|<f(x)\}$,

\begin{align*}
\int_{D(0,\eu^{-T})} \Big\lvert\sum_{n>0} a_n z^{n-1}\Big\rvert^2\diff \lambda(z) &\leq 
C\int_{\set R}\int_{\mathcal D_x}\Big\lvert\sum_{n>0} a_n z^{n-1}\Big\rvert^2\diff \lambda(z)\diff x.\\
\intertext{Then, with $U = \bigcup_{x\in \set R} \mathcal D_x$,}
\int_{D(0,\eu^{-T})} \Big\lvert\sum_{n>0} a_n z^{n-1}\Big\rvert^2\diff \lambda(z) &\leq 
C\Big\lvert\sum_{n>0} a_n z^{n-1}\Big\rvert^2_{L^\infty(U)}.
\end{align*}
(This time, we only write $|p|_{L^2} \leq C|p|_{L^\infty}$.) But this does not hold. Indeed, $z_0 = \eu^{-T/2+\iu y_0}$ (for instance) is non adherent to $U$, and we can construct a counter-example to the estimate above with Runge's theorem, as in the end of the proof of theorem~\ref{th:main_neg_R}.

\bibliographystyle{plain}
\bibliography{bib.bib}
\end{appendix}
\end{document}